\newtheorem{theorem}{Theorem}[section]
\newtheorem{lemma}{Lemma}[section]
\newtheorem{conjecture}{Conjecture}[section]
\numberwithin{equation}{section}
\def\im{\mathrm{i}}
\def\d{\mathrm{d}}
\def\e{\mathrm{e}}
\def\O{\mathcal{O}}
\def\Ai{\operatorname{Ai}}
\def\Bi{\operatorname{Bi}}
\definecolor{customgreen}{rgb}{0.0, 0.5, 0.0}
\author[G. Nemes]{Gerg\H{o} Nemes}
\email{nemes.gergo@renyi.hu}
\address{Alfr\'ed R\'enyi Institute of Mathematics, Re\'altanoda utca 13--15, Budapest H-1053, Hungary}
\keywords{asymptotic expansions, Airy functions, Bessel functions, cylinder functions, phase functions, zeros}
\subjclass[2010]{41A60, 33C10, 30C15}
\begin{document}

\title[On the real zeros of the cylinder and Airy functions]{Proofs of two conjectures on the real zeros\\ of the cylinder and Airy functions}

\begin{abstract} We prove the enveloping property of the known divergent asymptotic expansions of the large real zeros of the cylinder and Airy functions, and thereby answering in the affirmative two conjectures posed by Elbert and Laforgia and by Fabijonas and Olver, respectively. The essence of the proofs is the construction of analytic functions that return the zeros when evaluated along certain discrete sets of real numbers. By manipulating contour integrals of these functions, we derive the asymptotic expansions of the large zeros truncated after a finite number of terms plus remainders that can be estimated efficiently. The conjectures are then deduced as corollaries of these estimates. An analogous result for the associated phase function is also discussed.
\end{abstract}

\maketitle

\section{Introduction}

Let $\nu$ and $\alpha$ be real numbers, $0\leq \alpha<1$. We define the (general) cylinder function $\mathscr{C}_\nu  (z,\alpha )$ of order $\nu$ by
\[
\mathscr{C}_\nu  (z,\alpha ) = J_\nu  (z)\cos (\pi \alpha ) + Y_\nu  (z)\sin (\pi \alpha ).
\]
Here $J_\nu  (z)$ and $Y_\nu  (z)$ denote the Bessel functions of the first and second kind, respectively (see, for instance, \cite[\href{http://dlmf.nist.gov/10.2.ii}{\S10.2(ii)}]{DLMF}). In general, $\mathscr{C}_\nu  (z,\alpha )$ is a multivalued function of $z$. The principal branch corresponds to the principal branches of $J_{\pm \nu}(z)$, with a cut in the $z$-plane along the interval $(-\infty,0]$. It is well known \cite[Ch. XV, \S15.24]{Watson1944} that the cylinder function has an infinite number of positive real zeros, all of which are simple. We denote the $k$th positive zero of $\mathscr{C}_\nu  (z,\alpha )$, arranged in ascending order, by $j_{\nu ,\kappa }$ with $\kappa  = k + \alpha  > \frac{1}{2}(\left| \nu  \right| - \nu )$ and $k$ a non-negative integer (see Section \ref{Section2} for more details). A classical result of McMahon \cite{McMahon1894}\cite[Ch. XV, \S15.53]{Watson1944} states that for fixed $\nu$, the large zeros of the function $\mathscr{C}_\nu  (z,\alpha )$ are given by the asymptotic expansion
\begin{equation}\label{McMahonseries}
j_{\nu ,\kappa } \sim \beta _{\nu ,\kappa }  + \sum\limits_{n = 1}^\infty \frac{c_n (\nu )}{\beta _{\nu ,\kappa }^{2n - 1}}  = \beta _{\nu ,\kappa }  - \frac{4\nu ^2  - 1}{8\beta _{\nu ,\kappa } } - \frac{(4\nu ^2  - 1)(28\nu ^2  - 31)}{384\beta _{\nu ,\kappa }^3} +  \cdots 
\end{equation}
as $k\to +\infty$, where $\beta _{\nu ,\kappa }  = \left( \kappa  + \frac{1}{2}\nu  - \frac{1}{4} \right)\pi$. In Appendix \ref{Appendix}, we show that the coefficients $c_n (\nu )$ are polynomials in $\nu^2$ of degree $n$ and provide a recurrent scheme for their evaluation. The subject of our interest is the behaviour of the error terms associated with the expansion \eqref{McMahonseries} when $\nu$ is confined to the interval $-\frac{1}{2} < \nu < \frac{1}{2}$. Note that
\[
\mathscr{C}_{ \pm 1/2} (z,\alpha ) = \sqrt {\frac{2}{\pi z}} \cos \left( z - \left( \alpha  \pm \tfrac{1}{4} + \tfrac{1}{4} \right)\pi \right)
\]
(cf. \cite[\href{http://dlmf.nist.gov/10.16.E1}{Eq. 10.16.1}]{DLMF}), and therefore $j_{\pm 1/2 ,\kappa } = \beta _{\pm 1/2 ,\kappa }$.

The first result in this direction is due to Watson \cite[Ch. XV, \S15.33]{Watson1944} (extending an earlier result of Schafheitlin), who showed that the positive zeros of $J_\nu (z)$ (i.e., when $\alpha=0$) satisfy
\begin{equation}\label{Watsonbound}
\beta _{\nu ,k}  < j_{\nu ,k} 
\end{equation}
for all positive integer values of $k$ and $-\frac{1}{2} < \nu < \frac{1}{2}$. Improving on Watson's estimate, F\"orster and Petras \cite{Forster1993} obtained the following remarkable inequalities:
\begin{equation}\label{bounds}
\beta _{\nu ,k}  - \frac{4\nu ^2  - 1}{8\beta _{\nu ,k} } - \frac{(4\nu ^2  - 1)(28\nu ^2  - 31)}{384\beta _{\nu ,k}^3 } < j_{\nu ,k}  < \beta _{\nu ,k }  - \frac{4\nu ^2  - 1}{8\beta _{\nu ,k } },
\end{equation}
for any positive integer $k$ and $-\frac{1}{2} < \nu < \frac{1}{2}$ (compare \eqref{McMahonseries}). We remark that the upper bound in \eqref{bounds}, under the more restrictive assumption $0 \leq \nu < \frac{1}{2}$, was also proved earlier by Hethcote \cite{Hethcote1970}.

Motivated by the inequalities \eqref{Watsonbound} and \eqref{bounds}, Elbert and Laforgia \cite{Elbert2001} formulated the following conjecture.

\begin{conjecture}\label{Elbert}
For $-\frac{1}{2} < \nu < \frac{1}{2}$, an even (respectively odd) number of terms of McMahon's expansion always gives upper (respectively lower) bounds for $j_{\nu ,\kappa }$.
\end{conjecture}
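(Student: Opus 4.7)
The strategy is to realise $j_{\nu,\kappa}$ as the value at $\kappa$ of a function analytic in a right half-plane and admitting a Stieltjes-type integral representation with a sign-definite kernel, so that expanding a Cauchy kernel furnishes the truncated expansion \eqref{McMahonseries} together with a remainder whose sign and size can be read off immediately.

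First, introduce the Bessel phase function $\theta_\nu(z)$, characterized on $(0,\infty)$ by
\[
J_\nu(z)=M_\nu(z)\cos\theta_\nu(z),\qquad Y_\nu(z)=M_\nu(z)\sin\theta_\nu(z),
\]
with $M_\nu(z)=\sqrt{J_\nu^2(z)+Y_\nu^2(z)}$ and the normalization $\theta_\nu(z)=z-\frac{\nu\pi}{2}-\frac{\pi}{4}+o(1)$ as $z\to+\infty$. A short computation yields $\mathscr{C}_\nu(z,\alpha)=M_\nu(z)\cos(\theta_\nu(z)-\pi\alpha)$, so the positive zeros are precisely the solutions of $\theta_\nu(z)+\frac{\nu\pi}{2}+\frac{\pi}{4}=\beta_{\nu,\kappa}$. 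Setting $\Theta_\nu(z):=\theta_\nu(z)+\frac{\nu\pi}{2}+\frac{\pi}{4}$, one has $\Theta_\nu(z)\sim z$ at infinity and $\Theta_\nu(j_{\nu,\kappa})=\beta_{\nu,\kappa}$. Since $\Theta_\nu$ is real-analytic and strictly increasing with $\Theta_\nu'(z)=2/(\pi z M_\nu^2(z))>0$, it admits an analytic inverse $\Psi:=\Theta_\nu^{-1}$, defined by analytic continuation from the positive real axis into a right half-plane, satisfying $\Psi(\beta_{\nu,\kappa})=j_{\nu,\kappa}$. McMahon's expansion is thus the asymptotic expansion of $\Psi(t)-t$ at infinity.

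The heart of the argument is to establish, for $-\frac{1}{2}<\nu<\frac{1}{2}$, a representation of the form
\[
\Psi(t)-t=\int_0^\infty\frac{t\,w_\nu(s)\,\d s}{t^2+s^2}\qquad(t>0),
\]
with an explicit kernel $w_\nu(s)\geq 0$. Expanding via
\[
\frac{t}{t^2+s^2}=\sum_{n=1}^{N-1}\frac{(-1)^{n-1}s^{2n-2}}{t^{2n-1}}+\frac{(-1)^{N-1}s^{2N-2}}{t^{2N-3}(t^2+s^2)}
\]
and integrating against $w_\nu(s)\,\d s$ then gives
\[
\Psi(t)-t=\sum_{n=1}^{N-1}\frac{c_n(\nu)}{t^{2n-1}}+R_N(t),\qquad c_n(\nu)=(-1)^{n-1}\!\int_0^\infty\! s^{2n-2}w_\nu(s)\,\d s,
\]
with a remainder $R_N(t)=\frac{(-1)^{N-1}}{t^{2N-3}}\!\int_0^\infty\!\frac{s^{2N-2}w_\nu(s)}{t^2+s^2}\,\d s$ that, thanks to the nonnegativity of $w_\nu$, satisfies
\[
0<(-1)^{N-1}R_N(t)<\frac{|c_N(\nu)|}{t^{2N-1}}.
\]
This is the enveloping property for $\Psi(t)-t$; specializing to $t=\beta_{\nu,\kappa}$ and invoking uniqueness of asymptotic expansions to identify the $c_n(\nu)$ so produced with those of \eqref{McMahonseries} (matching the first few coefficients against the recurrent scheme of Appendix \ref{Appendix}), Conjecture \ref{Elbert} follows: $N$ terms with $N$ even forces $R_N<0$ (upper bound), while $N$ odd forces $R_N>0$ (lower bound).

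The main obstacle is the construction of a nonnegative kernel $w_\nu$ for the \emph{inverse} function $\Psi$. Positivity of $M_\nu^2$ in the range $|\nu|<\frac{1}{2}$ is classical through Nicholson's integral,
\[
M_\nu^2(z)=\frac{8}{\pi^2}\int_0^\infty K_0(2z\sinh s)\cosh(2\nu s)\,\d s\qquad\bigl(z>0,\ |\nu|<\tfrac{1}{2}\bigr),
\]
which combined with $\Theta_\nu'(z)=2/(\pi z M_\nu^2(z))$ provides a natural starting point. However, converting this into a sign-definite representation of $\Psi(t)-t$ requires an explicit contour manipulation: $\Theta_\nu$ must be continued into a cut complex neighborhood, the branch structure of $\Psi$ must be tracked, and a Cauchy integral for $\Psi(t)-t$ must be collapsed onto an appropriate Stieltjes-type cut. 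It is precisely here that the hypothesis $|\nu|<\frac{1}{2}$ is used in an essential way, in agreement with the fact that the conjecture itself is restricted to this range; outside this interval one expects the positivity of $w_\nu$, and with it the enveloping property, to fail.
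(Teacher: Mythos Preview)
Your strategy is exactly the paper's: pass to the inverse $\Psi=X_\nu$ of the shifted phase $\Theta_\nu$, obtain a Stieltjes-type representation $\Psi(t)-t=\int_0^\infty \frac{t\,w_\nu(s)}{t^2+s^2}\,\d s$ with $w_\nu\ge 0$, and expand the Cauchy kernel. The paper resolves precisely the obstacle you flag, but not through Nicholson's integral. It continues $\Theta_\nu$ to $\Re z>0$ via
\[
\Theta_\nu(z)=\frac{1}{2\im}\log\!\left(\e^{-\pi\im/2}\,\frac{K_\nu(z\e^{-\pi\im/2})}{K_\nu(z\e^{\pi\im/2})}\right),
\]
which is well defined because $K_\nu$ has no zeros in $|\arg z|\le\pi$ for $|\nu|<\tfrac{3}{2}$. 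For $|\nu|<\tfrac{1}{2}$ one then shows, by examining the image of the imaginary axis and invoking a boundary-correspondence principle, that $\Theta_\nu$ is \emph{univalent} on $\Re z>0$ and its range contains the closed half-plane $\Re w\ge 0$. Hence $\Psi$ is analytic on $\Re w\ge 0$ with values in $\Re z>0$, and the kernel is simply $w_\nu(s)=\tfrac{2}{\pi}\Re\Psi(\im s)>0$; the Cauchy integral over a $D$-shaped contour collapses to the imaginary axis as you anticipate, and rapid decay of $\Re\Psi(\im s)$ (needed for all moments to converge) is read off from the asymptotic expansion of $\Psi$ itself along $\arg w=\tfrac{\pi}{2}$.

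Your reference to Nicholson's integral is a false lead: $M_\nu^2(z)>0$ on $(0,\infty)$ for every real $\nu$, so it cannot be where the restriction $|\nu|<\tfrac{1}{2}$ enters. That restriction is used solely to control the image of the imaginary axis under $\Theta_\nu$ (it lies in $\Re w<0$ precisely when $|\nu|<\tfrac{1}{2}$), which is what guarantees both the analytic continuation of $\Psi$ to $\Re w\ge 0$ and the positivity of $w_\nu$. Finally, note that the argument needs $t=\beta_{\nu,\kappa}>0$; the paper observes that the conjecture can fail when $\beta_{\nu,\kappa}<0$.
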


Let us emphasise that the inequalities \eqref{Watsonbound} and \eqref{bounds} were proved in the special case that $\alpha=0$, whereas the conjecture was proposed for the general situation when $0\leq \alpha<1$. It can be verified by direct numerical computation that the conjecture does not necessarily hold if $\beta _{\nu ,\kappa }<0$ (which can only happen if $k=0$). On the other hand, graphical depiction suggests that $( - 1)^n c_n (\nu )<0$ holds for all positive integer $n$ and $-\frac{1}{2} < \nu < \frac{1}{2}$ (cf. Figure \ref{fig4}), whence the asymptotic expansion \eqref{McMahonseries} is likely of alternating type when $\beta _{\nu ,\kappa }>0$, giving some evidence for the conjecture being true for positive values of $\beta _{\nu ,\kappa }$.

One of our main goals is to establish Conjecture \ref{Elbert} under the assumption that $\beta _{\nu ,\kappa }>0$. In  particular, we shall prove the following theorem.

\begin{theorem}\label{Elberttheorem} Assume that $-\frac{1}{2} < \nu < \frac{1}{2}$ and $\beta _{\nu ,\kappa }>0$. Then for all positive integers $n$ and $N$, $( - 1)^n c_n (\nu )<0$ and the $N$th error term of McMahon's expansion \eqref{McMahonseries} (that is, the error on stopping the expansion \eqref{McMahonseries} at $n = N-1$) does not exceed the first neglected term in absolute value and has the same sign as that term.
\end{theorem}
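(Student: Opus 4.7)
My plan is to realise each zero as the value of a single analytic function $F_\nu$ at the point $\beta_{\nu,\kappa}$, and then to derive McMahon's expansion together with its remainder from a Cauchy--Hilbert integral representation of $F_\nu$ arranged so that the sign and size of the remainder are transparent. Using the phase--amplitude decomposition $\mathscr{C}_\nu(x,\alpha) = M_\nu(x)\cos(\theta_\nu(x) - \pi\alpha)$, the positive zero $j_{\nu,\kappa}$ is characterised by $\widetilde{\theta}_\nu(j_{\nu,\kappa}) = \beta_{\nu,\kappa}$, where $\widetilde{\theta}_\nu(x) = \theta_\nu(x) + (\nu/2 + 1/4)\pi$ is asymptotic to the identity at $+\infty$. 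The first step is therefore to extend $\widetilde{\theta}_\nu$ analytically across a neighbourhood of $(0,\infty)$ in such a way that it remains univalent and its only singularities lie on the imaginary axis outside a symmetric interval $[-\im Y_\nu, \im Y_\nu]$ determined by the Bessel turning points. Its functional inverse $F_\nu$ is then analytic, real and odd on a strip containing $(0,\infty)$, with $F_\nu(\beta_{\nu,\kappa}) = j_{\nu,\kappa}$.

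The second step is to integrate $F_\nu(\zeta) - \zeta$ against a Cauchy kernel and to collapse the contour onto the two imaginary cuts. Oddness and reality of $F_\nu$ together with its $\O(1/\zeta)$ decay will produce the dispersion formula
\[
F_\nu(\zeta) - \zeta = \frac{2}{\pi}\int_{Y_\nu}^{\infty}\frac{\zeta\,\sigma_\nu(t)}{t^2 + \zeta^2}\,\d t,
\]
where $\sigma_\nu(t)$ is (a constant multiple of) the jump of $F_\nu$ across the upper cut at $\im t$. Applying the identity
\[
\frac{\zeta}{t^2 + \zeta^2} = \sum_{n=1}^{N-1}\frac{(-1)^{n-1} t^{2n-2}}{\zeta^{2n-1}} + \frac{(-1)^{N-1} t^{2N-2}}{\zeta^{2N-3}(t^2 + \zeta^2)}
\]
then identifies the McMahon coefficients and the remainder after $N-1$ terms as
\[
c_n(\nu) = \frac{2(-1)^{n-1}}{\pi}\int_{Y_\nu}^{\infty} t^{2n-2}\sigma_\nu(t)\,\d t,\qquad R_N(\zeta) = \frac{2(-1)^{N-1}}{\pi\,\zeta^{2N-3}}\int_{Y_\nu}^{\infty}\frac{t^{2N-2}\,\sigma_\nu(t)}{t^2+\zeta^2}\,\d t.
\]
Granted that $\sigma_\nu(t) > 0$ for $t > Y_\nu$ and $|\nu| < \tfrac{1}{2}$, both conclusions at $\zeta = \beta_{\nu,\kappa} > 0$ are immediate: the sign of $c_n(\nu)$ is $(-1)^{n-1}$, so $(-1)^n c_n(\nu) < 0$; the signs of $R_N(\zeta)$ and of the first neglected term $c_N(\nu)/\zeta^{2N-1}$ agree; and the elementary majorisation $1/(t^2 + \zeta^2) \leq 1/\zeta^2$ yields $|R_N(\zeta)| \leq |c_N(\nu)|/\zeta^{2N-1}$.

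The chief obstacle is establishing $\sigma_\nu(t) > 0$ throughout $t > Y_\nu$ and $-\tfrac{1}{2} < \nu < \tfrac{1}{2}$. The density is the jump of the inverse of $\widetilde{\theta}_\nu$ across a cut issuing from a turning point of Bessel's equation, and its analysis will require an explicit representation of $\theta_\nu$ along the imaginary axis in terms of the modified Bessel functions $I_\nu$ and $K_\nu$, combined with Liouville--Green/Airy uniform asymptotics in the turning-point region. I expect the restriction $|\nu|<\tfrac{1}{2}$ to enter precisely here, through the monotonicity of a derivative quantity associated with $\widetilde{\theta}_\nu$ (consistent with the degeneracy $j_{\pm 1/2,\kappa} = \beta_{\pm 1/2,\kappa}$ at the endpoints). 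A secondary technicality will be justifying the closing of the Cauchy contour: this calls for horizontal-strip decay estimates for $F_\nu(\zeta)-\zeta$, which should follow from the corresponding behaviour of $\theta_\nu$ itself.
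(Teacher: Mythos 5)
Your skeleton is the same as the paper's: realise the zeros as values of the inverse of the shifted phase function $\Theta_\nu(z)=\theta_\nu(z)+\left(\tfrac12\nu+\tfrac14\right)\pi$, write a Cauchy/dispersion representation for that inverse collapsed onto the imaginary axis, and read off the signs of the coefficients and of the remainder from a one-signed density together with the majorisation $|1+(s/w)^2|\ge 1$ for $w>0$. Your final formulae are correct in form, with $Y_\nu=0$ and $\sigma_\nu(t)=\Re X_\nu(\im t)$. However, the analytic structure you posit is not the right one, and the step you yourself flag as the chief obstacle is left unresolved in a way that matters. First, the inverse function is neither odd nor does it have cuts issuing from points $\pm\im Y_\nu$ of the imaginary axis: the obstruction to continuing $X_\nu$ leftwards is the image curve $\mathcal{G}_\nu=\Theta_\nu(\im\mathbb{R})$, which lies entirely in the \emph{open} left half-plane, crosses the real axis at $\left(\tfrac12|\nu|-\tfrac14\right)\pi<0$, and only approaches the imaginary axis asymptotically. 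So there is no jump density; the correct density is the boundary real part $\Re X_\nu(\im s)$ on all of $(0,+\infty)$, and the even/odd symmetrisation in your dispersion formula is obtained not from oddness but by adding the identically vanishing integral $\oint_\Gamma (X_\nu(t)-t)/(t+w)\,\d t$ and using Schwarz reflection in the positive real axis (where $X_\nu$ is real).

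Second, and decisively: once one knows that $\Theta_\nu$ is injective on $\Re z>0$ and that its range contains the closed half-plane $\Re w\ge 0$, the positivity $\Re X_\nu(\im s)>0$ is automatic, because the range of $X_\nu$ is $\Re z>0$. That univalence-and-range statement is the real content of the proof, and it is established not by turning-point (Liouville--Green/Airy) asymptotics but by the explicit formula \eqref{thetaonim} for $\Theta_\nu(s\e^{\pi\im/2})$ in terms of $I_\nu(s)/K_\nu(s)$, the positivity and monotonicity of that ratio, and the boundary correspondence principle applied after mapping the half-plane to a disc; this is also exactly where $-\tfrac12<\nu<\tfrac12$ enters (through the signs of $\cos\pi\nu$ and $\sin\pi\nu$). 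Your plan inverts the logical order, attempting to prove positivity of a density whose very existence presupposes the analytic continuation you have not yet secured. Finally, your expansion of the Cauchy kernel requires all moments $\int_0^{+\infty}s^{2n-2}\,\Re X_\nu(\im s)\,\d s$ to converge, i.e.\ $\Re X_\nu(\im s)=o(s^{-r})$ for every $r$; the $\O(1/|\zeta|)$ decay you invoke is nowhere near sufficient. This rapid decay follows only after one shows that the full asymptotic expansion of $\Theta_\nu$ (hence of $X_\nu$) holds up to the closed sector $|\arg w|\le\tfrac{\pi}{2}$, so that every term of the expansion of $X_\nu(\im s)$ is purely imaginary; this point needs to be argued explicitly (the paper does so via the expansion of $K_\nu$ valid for $|\arg z|\le\pi$).
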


Throughout this paper, if not stated otherwise, empty sums are taken to be zero. In the language of enveloping series \cite[Ch. 4, \S1]{Polya1998}, Theorem \ref{Elberttheorem} states that the expansion \eqref{McMahonseries} envelopes the zeros $j_{\nu ,\kappa }$ in the strict sense. Theorem \ref{Elberttheorem} will be established as a direct consequence of the more general Theorem \ref{thmX} given in Section \ref{Section2}. Our approach is completely different from that of F\"orster and Petras since their method does not seem to extend to the general situation.

It is natural to ask if a statement similar to Theorem \ref{Elberttheorem} holds for other values of $\nu$. We briefly discuss this problem in Section \ref{Section6}. For a survey on further properties of the zeros of cylinder functions, the interested reader is referred to \cite{Elbert20011}.

The second family of functions we are interested in consists of certain linear combinations of Airy functions. More precisely, we consider, for $0\leq \alpha<1$ and any complex $z$, the entire function
\begin{equation}\label{genAiry}
\mathscr{A}(z,\alpha ) = \Ai(z)\cos (\pi \alpha ) + \Bi(z)\sin (\pi \alpha ),
\end{equation}
where $\Ai(z)$ and $\Bi(z)$ are the Airy functions of the first and second kind, respectively (see, e.g., \cite[\href{http://dlmf.nist.gov/9.2}{\S9.2}]{DLMF}). It is known \cite{Gil2014} that $\mathscr{A}(z,\alpha )$ has an infinite number of negative zeros. We denote them by $a_\kappa$, arranged in ascending order of absolute value with $\kappa  = k - \alpha  > \frac{1}{6}$ and $k$ a positive integer (hence indexing starts at $k = 1$ or $k = 2$ according to whether $\alpha  < \frac{5}{6}$ or $\alpha  \ge \frac{5}{6}$, see Section \ref{Section2} for more details). When $\alpha=\frac{1}{2}$, $a_\kappa$ is precisely the $k$th negative zero of the Airy function $\Bi(z)$ and we adopt the standard notation $b_k$ for this particular case. The large negative zeros of the function \eqref{genAiry} are known to posses the divergent asymptotic expansion
\begin{equation}\label{airyzerosasymp}
a_\kappa   \sim  - \gamma _\kappa ^{2/3} \left( 1 + \sum\limits_{n = 1}^\infty  \frac{T_n}{\gamma _\kappa ^{2n}} \right) =  - \gamma _\kappa ^{2/3} \left( 1 + \frac{5}{48\gamma _\kappa ^2 } - \frac{5}{36\gamma _\kappa ^4 } + \frac{77125}{82944\gamma _\kappa ^6 } -  \frac{108056875}{6967296\gamma _\kappa ^8 }+\cdots \right)
\end{equation}
as $k\to +\infty$, where $\gamma _\kappa   = \frac{3}{8}\pi (4\kappa  - 1)$. The coefficients $T_n$ are rational numbers and can be computed
from recursive formulae (see Appendix \ref{Appendix}). In the special cases of $a_k$ ($\alpha=0$) and $b_k$ ($\alpha=\frac{1}{2}$), this is a classical result and is presumably due to Miller \cite{Miller1946}. For general $0\leq \alpha<1$, \eqref{airyzerosasymp} was established more recently by Gil and Segura \cite{Gil2014}. Our point of interest is the behaviour of the remainder terms associated with the expansion \eqref{airyzerosasymp}.

The most important result in this direction is due to Pittaluga and Sacripante \cite{Pittaluga1991}. For the particular cases of $a_k$ and $b_k$, they showed that the $N$th error term (that is, the error on stopping the expansion \eqref{airyzerosasymp} at $n = N-1$) does not exceed the first neglected term in absolute value and has the same sign as this term when $N = 1, 2, 3, 4, 5$, and also that the sixth error term has the opposite sign to the fifth term.

With the help of computer algebra system (Maple V), Fabijonas and Olver \cite{Fabijonas1999} verified the inequality $(-1)^n T_n<0$ up to $n=99$, whence the asymptotic expansion \eqref{airyzerosasymp} is likely of alternating type. This observation and the results by Pittaluga and Sacripante led them to make the following conjecture.

\begin{conjecture}\label{Olver}
In the expansions of $a_k$ and $b_k$, the $N$th error term is bounded by the first neglected term and has the same sign for all values of $N\geq 1$.
\end{conjecture}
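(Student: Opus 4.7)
My plan is to adapt the strategy underlying Theorem \ref{Elberttheorem} to the Airy setting, with the Bessel phase function replaced by its Airy analogue. From the amplitude–phase decomposition $\mathscr{A}(-z,\alpha) = M(z)\cos(\theta(z) - \pi\alpha)$ valid for $z > 0$, where $M(z)^2 = \Ai(-z)^2 + \Bi(-z)^2$ and $\theta$ is real-analytic, strictly monotone, and satisfies $\theta(z) \sim \tfrac{2}{3}z^{3/2} - \tfrac{\pi}{4}$ as $z \to +\infty$, one reads off $-a_\kappa = \theta^{-1}(\gamma_\kappa)$. This exhibits $a_\kappa$ as the restriction to the discrete set $\{\gamma_\kappa\}$ of a single analytic function
\[
\mathcal{F}(\gamma) := \gamma^{-2/3}\theta^{-1}(\gamma),
\]
defined on an analytic continuation of the positive $\gamma$-axis, with $\mathcal{F}(\gamma)\sim 1 + \sum_{n\geq 1}T_n/\gamma^{2n}$ as $\gamma\to+\infty$ and $a_\kappa = -\gamma_\kappa^{2/3}\mathcal{F}(\gamma_\kappa)$. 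Conjecture \ref{Olver} thus reduces to an envelope statement for $\mathcal{F}$.

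To establish that envelope statement, apply Cauchy's integral formula to write the remainder
\[
R_N(\gamma) := \mathcal{F}(\gamma) - 1 - \sum_{n=1}^{N-1}\frac{T_n}{\gamma^{2n}}
\]
as an integral over a large circle, then deform the contour inward to wrap around the branch cut of $\mathcal{F}$ in the complex plane. I expect the resulting collapse onto the cut to produce a Stieltjes-type representation
\[
R_N(\gamma) = \frac{(-1)^N}{\gamma^{2N}}\int_0^{\infty} \frac{\Omega(t)\,\d t}{1+t/\gamma^2},
\]
together with the companion formula $T_N = (-1)^N \int_0^{\infty} \Omega(t)\,\d t$ obtained by letting $\gamma \to +\infty$ inside the integral. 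Provided $\Omega(t) > 0$, the elementary inequality $0 < (1+t/\gamma^2)^{-1} < 1$ immediately delivers $(-1)^N T_N < 0$, $\operatorname{sign}(R_N(\gamma)) = \operatorname{sign}(T_N/\gamma^{2N})$, and $|R_N(\gamma)| \leq |T_N|/\gamma^{2N}$. Evaluating at $\gamma = \gamma_k > 0$ and specialising $\alpha$ to $0$ and $\tfrac12$ then yields both statements of Conjecture \ref{Olver}.

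The principal obstacle is the positivity of $\Omega(t)$, which amounts to a sign statement for the imaginary part of the analytic continuation of $\theta^{-1}$ across its cut, or equivalently to a monotonicity property of the Airy phase along a specific complex contour. I expect this to be deduced from an integral representation of $\theta'$ based on the Airy differential equation combined with a Wronskian identity, together with explicit control over the complex zeros of $\Ai(-z) \pm \im\Bi(-z)$, which govern the singularities of $\theta$ and hence of $\mathcal{F}$. A secondary technical point is to verify that the branch cut of $\mathcal{F}$ lies off the positive real $\gamma$-axis so that the contour deformation is unobstructed; this should follow from the strict monotonicity of $\theta$ on $(0,\infty)$ by a standard complex-analytic argument.
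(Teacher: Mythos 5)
Your overall architecture coincides with the paper's: realise the zeros as values of a single analytic function (your $\mathcal{F}(\gamma)=\gamma^{-2/3}\theta^{-1}(\gamma)$ is exactly $w^{-2/3}T(w)$ with $T(w)=(\tfrac{3}{2}X_{1/3}(\tfrac{2}{3}w))^{2/3}$ in the paper's notation), derive a Stieltjes-type representation for the remainder by contour manipulation, and conclude enveloping from single-signedness of the density. However, the two load-bearing steps are left as expectations, and for both of them the route you sketch is either insufficient or not obviously viable. First, the analyticity of $\theta^{-1}$ on a domain containing the \emph{closed} right half-plane does not follow from strict monotonicity of $\theta$ on $(0,\infty)$ ``by a standard complex-analytic argument'': the paper has to prove (Lemma 4.1) that $\Theta_{1/3}(z)=\theta_{1/3}(z)+(\tfrac{1}{6}+\tfrac{1}{4})\pi$ is injective on $\Re z>0$ and that its range contains $\Re w\ge 0$, which requires the non-vanishing of $K_{1/3}$ in $|\arg z|\le\pi$ (Watson), analyticity of the phase on the imaginary axis (non-vanishing of the Hankel functions near $\arg z=\pm\tfrac{\pi}{2}$), and a boundary-correspondence argument showing the image boundary curve crosses the real axis at the negative point $(\tfrac{1}{6}-\tfrac{1}{4})\pi$. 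Without this, the contour collapse onto the imaginary axis (the paper does not wrap a branch cut; it pushes a $D$-shaped contour to the boundary $\Re w=0$ and uses the boundary values there) is not justified.

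Second, and more seriously, the positivity of your density $\Omega$ is the heart of the matter, and the route you propose (Wronskian identities for the Airy equation plus control of the complex zeros of $\Ai(-z)\pm\im\Bi(-z)$) is not what makes the argument work and is not obviously tractable. In the paper the sign statement is $\Im\bigl(\e^{-\frac{\pi}{3}\im}T(\im s)\bigr)<0$, and it drops out of the mapping property already established: since the range of $X_{1/3}$ is the open right half-plane, $\arg X_{1/3}(\tfrac{2}{3}\im s)\in(-\tfrac{\pi}{2},\tfrac{\pi}{2})$, hence $\arg\bigl(\e^{-\frac{\pi}{3}\im}T(\im s)\bigr)=\tfrac{2}{3}\arg X_{1/3}(\tfrac{2}{3}\im s)-\tfrac{\pi}{3}\in(-\tfrac{2\pi}{3},0)$. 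No differential-equation input is needed beyond what went into the injectivity lemma. Two further technical points you omit: the density must be shown to decay faster than any power of $s$ (the paper proves $\Im(\e^{-\frac{\pi}{3}\im}T(\im s))=o(s^{-r})$ for every $r$, which is what makes all the moment integrals defining $T_N$ converge), and your formulas conflate an $N$-independent density with the $N$-dependent moments --- as written, $T_N=(-1)^N\int_0^\infty\Omega(t)\,\d t$ together with $R_N(\gamma)=\tfrac{(-1)^N}{\gamma^{2N}}\int_0^\infty\frac{\Omega(t)}{1+t/\gamma^2}\,\d t$ for a single $\Omega$ cannot hold for all $N$ simultaneously; the correct statement has $T_N$ as the $N$-th moment, $T_N=(-1)^N\tfrac{2}{\pi}\int_0^\infty s^{2N-5/3}\,\Im\bigl(\e^{-\frac{\pi}{3}\im}T(\im s)\bigr)\d s$.
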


The second main aim of this paper is to prove Conjecture \ref{Olver} for the case $\gamma_\kappa>0$ and all $0\leq \alpha<1$. In  particular, we will prove the following theorem.

\begin{theorem}\label{Olvertheorem} Assume that $\gamma_{\kappa }>0$. Then for all positive integers $n$ and $N$, $( - 1)^n T_n <0$ and the $N$th error term of the expansion \eqref{airyzerosasymp} does not exceed the first neglected term in absolute value and has the same sign as that term.
\end{theorem}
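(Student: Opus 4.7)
My plan would mirror the strategy that the abstract advertises for Theorem~\ref{Elberttheorem}: construct an analytic function $\Phi$ on a right half-plane such that $-a_\kappa=\Phi(\gamma_\kappa)$ for every $\kappa$ with $\gamma_\kappa>0$, and then extract a contour-integral representation for the remainder after truncating the asymptotic series of $\Phi$ at an arbitrary order. The enveloping property would follow from the fact that the integrand has a definite sign on the chosen contour.

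Concretely, I would use the modulus--phase decomposition of the Airy functions on the negative real axis (see \cite[\S9.8]{DLMF}) to write, for $x>0$,
\[
\mathscr{A}(-x,\alpha)=M(x)\sin\bigl(\theta(x)+\pi\alpha\bigr),
\]
with $M(x)>0$ and $\theta(x)$ strictly increasing on $(0,\infty)$ satisfying $\theta(x)\sim\tfrac{2}{3}x^{3/2}+\tfrac{\pi}{4}$ as $x\to\infty$. The zero condition $\mathscr{A}(a_\kappa,\alpha)=0$ reads $\theta(-a_\kappa)=\kappa\pi$; upon rescaling via $\widetilde\theta(x):=\tfrac{3}{2}(\theta(x)-\tfrac{\pi}{4})\sim x^{3/2}$ it becomes $\widetilde\theta(-a_\kappa)=\gamma_\kappa$, so $-a_\kappa=\Phi(\gamma_\kappa)$ with $\Phi$ the inverse of $\widetilde\theta$. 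The crucial analytic input is that $\Ai$ and $\Bi$ share no common zero off the negative real axis, which permits $\Phi$ to be extended to a holomorphic function in a sector containing the positive real axis, where it admits the Poincar\'e expansion $\Phi(w)\sim w^{2/3}(1+\sum_{n\ge 1}T_n w^{-2n})$ matching \eqref{airyzerosasymp}.

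The central step would be to represent the $N$th remainder
\[
R_N(w):=\Phi(w)-w^{2/3}\Bigl(1+\sum_{n=1}^{N-1}\frac{T_n}{w^{2n}}\Bigr)
\]
as a Cauchy integral and then to deform the contour onto the imaginary axis, obtaining an explicit integral of a real kernel $K_N(t)$ of fixed sign against $1/(w^2+t^2)$ together with an appropriate $w$-power. Once such a representation is in hand, $T_N$ appears as a nonzero multiple of $\int_0^\infty K_N(t)\,\d t$, pinning down $(-1)^N T_N<0$, while the elementary inequality $1/(w^2+t^2)\le 1/w^2$ for real $w>0$ yields $|R_N(w)|\le|w^{2/3}T_N/w^{2N}|$ with matching sign. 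Specialising $w=\gamma_\kappa$ delivers Theorem~\ref{Olvertheorem}.

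The hardest point is the constant-sign claim for $K_N$ along the imaginary axis. This is the Airy analogue of the positivity results underpinning the Bessel phase function used in the proof of Theorem~\ref{Elberttheorem} and should be teased out of an explicit integral representation of $\widetilde\theta$ derived from the Nicholson-type identity $\Ai^2(-x)+\Bi^2(-x)=\pi^{-1}M(x)^2$ continued into the right half-plane. Once this positivity is secured, the domain of analyticity for $\Phi$ and the contour deformation in the preceding step are routine, and the theorem follows.
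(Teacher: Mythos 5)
Your plan is, in substance, the paper's: your $\Phi$ is exactly the function $T$ of \eqref{Tdef} (your rescaled Airy phase satisfies $\widetilde\theta(x)=\tfrac{3}{2}\Theta_{1/3}\left(\tfrac{2}{3}x^{3/2}\right)$, cf.\ the first Remark after Theorem \ref{thmT}), and the Cauchy-integral representation of the truncation error with the contour pushed onto the imaginary axis, a fixed-sign kernel, and $T_N$ recovered as a moment of that kernel is precisely how Theorem \ref{thmT} is proved. So the architecture is right. However, the two ingredients you either declare routine or propose to extract from a Nicholson-type identity are exactly where the work lies, and your sketches of them would not go through as stated.

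First, the sign of the kernel. The paper does not obtain it from an integral representation of the phase function on the real axis; a Nicholson-type formula controls $M^2$ and $\theta'$ for real arguments, whereas what is needed is the location of the boundary values $T(\im s)$, i.e., a statement about where the \emph{range} of the inverse phase function lies. The actual mechanism is: $\Theta_{1/3}$ is injective on $\Re z>0$ and its range contains the closed half-plane $\Re w\ge 0$ (the first, unnumbered lemma of Section \ref{Section4}, proved via the boundary correspondence principle after showing $K_{1/3}$ has no zeros near $\arg z=\pm\pi$); consequently $X_{1/3}$ maps the imaginary axis into the open right half-plane, so $\arg X_{1/3}\left(\tfrac{2}{3}\im s\right)\in\left(-\tfrac{\pi}{2},\tfrac{\pi}{2}\right)$ and $\arg\left(\e^{-\frac{\pi}{3}\im}T(\im s)\right)=\tfrac{2}{3}\arg X_{1/3}\left(\tfrac{2}{3}\im s\right)-\tfrac{\pi}{3}\in\left(-\tfrac{2\pi}{3},0\right)$, which is \eqref{imTineq}. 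That injectivity/range lemma is the hardest step of the whole argument and it simultaneously supplies the analytic continuation of your $\Phi$ to $\Re w\ge 0$ and the sign of the kernel; nothing in your sketch substitutes for it. Second, for the moment integrals $\int_0^\infty s^{2N}K_N(s)\,\d s$ to converge for \emph{every} $N$ you need the kernel to decay faster than any power along the imaginary axis (the analogues of \eqref{ReXasymp} and \eqref{imTasymp}); this rests on the asymptotic expansion of $X_{1/3}(w)$ holding up to the closed boundary rays $\arg w=\pm\tfrac{\pi}{2}$, which is not automatic from a Poincar\'e expansion valid on the positive real axis and is not addressed in your proposal.
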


In other words, Theorem \ref{Olvertheorem} asserts that the expansion \eqref{airyzerosasymp} envelopes the zeros $a_{\kappa }$ in the strict sense. Theorem \ref{Olvertheorem} is a direct consequence of the more general Theorem \ref{thmT} stated in Section \ref{Section2}. Our method of proof is rather different from that of Pittaluga and Sacripante which is based on a Sturm-type theorem, due to Hethcote \cite{Hethcote19702}. They verified their result for each $N$ separately, and as $N$ increases, their procedure becomes cumbersome and increasingly laborious.

The remaining part of the paper is structured as follows. In Section \ref{Section2}, we state our main results. The proofs are provided in Sections \ref{Section3}--\ref{Section5}. The paper concludes with a discussion in Section \ref{Section6}.

\section{Main results}\label{Section2}

To state our results, we introduce some further notation. If $H_\nu ^{(1)} (z)$ denotes the Hankel function of the first kind \cite[\href{http://dlmf.nist.gov/10.2.ii}{\S10.2(ii)}]{DLMF}, then for any $\nu \geq 0$ and $z>0$, we write
\begin{equation}\label{thetaMdef}
M_\nu  (z)\e^{\im\theta _\nu  (z)}  = H_\nu ^{(1)} (z)
\end{equation}
where the modulus function $M_\nu (z)$ ($>0$) of order $\nu$ and the phase function $\theta_\nu (z)$ of order $\nu$ are continuous real functions of $\nu$ and $z$, with the branch of $\theta _\nu  (z)$ fixed by
\[
\mathop {\lim }\limits_{z \to 0 + } \theta _\nu  (z) =  - \frac{\pi }{2}.
\]
For basic properties of these functions, we refer the reader to \cite[\href{http://dlmf.nist.gov/10.18}{\S10.18}]{DLMF}. Both $M_\nu (z)$ and $\theta_\nu (z)$ can be extended to negative values of their order $\nu$ by appealing to the connection formula $H_{ - \nu }^{(1)} (z) = \e^{\pi \nu \im} H_\nu ^{(1)} (z)$ \cite[\href{http://dlmf.nist.gov/10.4.E6}{Eq. 10.4.6}]{DLMF}. Hence,
\begin{equation}\label{phaserelation}
M_{ - \nu } (z) = M_\nu  (z) \quad \text{and} \quad \theta _{ - \nu } (z) = \theta _\nu  (z) + \pi \nu
\end{equation}
for all positive $\nu$ and $z$. With these definitions,
\begin{equation}\label{Jviaphase}
J_\nu  (z) = M_\nu  (z)\cos \theta _\nu  (z) \quad \text{and} \quad Y_\nu  (z) = M_\nu  (z)\sin \theta _\nu (z)
\end{equation}
\cite[\href{http://dlmf.nist.gov/10.18.E4}{Eq. 10.18.4}]{DLMF}, and therefore  
\begin{equation}\label{Cphasemodulus}
\mathscr{C}_\nu  (z,\alpha ) = M_\nu  (z)\cos (\theta _\nu  (z) - \pi \alpha )
\end{equation}
for any real $\nu$ and $z>0$. Now let $z^\ast$ be a positive root of the cylinder function $\mathscr{C}_\nu  (z,\alpha )$. Since $M_\nu(z)$ is positive, we can infer that 
\[
\theta _\nu  (z^\ast) = \left(k +\alpha- \tfrac{1}{2}\right)\pi
\]
for some integer $k$. Because the range of $\theta _\nu (z)$ consists of real numbers that are greater than $
 - \frac{\pi }{2}(\nu  - \left| \nu  \right| + 1)$, the inequality
\begin{equation}\label{krequirement}
k  + \alpha  >  \tfrac{1}{2}(\left| \nu  \right| -\nu)
\end{equation}
must hold. From
\begin{equation}\label{Mtheta}
\theta '_\nu  (z) = \frac{2}{\pi }\frac{1}{z M_\nu ^2 (z)} > 0
\end{equation}
(see, e.g., \cite[\href{http://dlmf.nist.gov/10.18.E8}{Eq. 10.18.8}]{DLMF}), it follows that the phase function $\theta_\nu  (z)$ is a strictly monotonically increasing function of $z$ on the positive real axis. Consequently, for any integer $k$ fulfilling \eqref{krequirement}, there is a unique pre-image of $\left(k +\alpha- \tfrac{1}{2}\right)\pi$ with respect to $\theta_\nu  (z)$ and this pre-image is a positive root of $\mathscr{C}_\nu  (z,\alpha )$. Furthermore, all positive roots can be obtained in this way. Taking into account the monotonicity of $\theta_\nu  (z)$, we may then claim that
\[
\theta _\nu  (j_{\nu ,\kappa } ) = \left( k + \alpha  - \tfrac{1}{2} \right)\pi  = \left( \kappa  - \tfrac{1}{2} \right)\pi  = \beta _{\nu ,\kappa }  - \left( \tfrac{1}{2}\nu  + \tfrac{1}{4} \right)\pi .
\]
Since $\theta _\nu  (z) + \left( \frac{1}{2}\nu  + \frac{1}{4} \right)\pi$ is strictly monotonically increasing with respect to $z$, it has an inverse function $X_\nu(w)$, say. For real $\nu$ and $w > \left( \frac{1}{2}|\nu|  - \frac{1}{4} \right)\pi$, $X_\nu(w)$ is a continuous, positive real-valued function of $\nu$ and $w$ that satisfies
\[
X_\nu (\beta _{\nu ,\kappa }) = j_{\nu ,\kappa }.
\]
Thus, McMahon's formula \eqref{McMahonseries} yields an asymptotic series for $X_\nu (\beta _{\nu ,\kappa })$ when $k$ (or equivalently $\beta _{\nu ,\kappa }$) becomes large. To establish Theorem \ref{Elberttheorem}, we shall derive the large-$w$ asymptotic expansion of $X_\nu (w)$ truncated after a finite number of terms plus a remainder that can be estimated efficiently. The derivation involves complex integration and it requires the analytic continuation of $X_\nu (w)$ to complex values of $w$. For the latter purpose, we first need to extend the phase function $\theta_\nu(z)$ to complex $z$. Our technique can also be applied to obtain computable error bounds for the know large-$z$ asymptotic series of $\theta_\nu(z)$ \cite[\href{http://dlmf.nist.gov/10.18.E18}{Eq. 10.18.18}]{DLMF}, provided $-\frac{3}{2} \leq \nu \leq \frac{3}{2}$, without much further effort. Since to our best knowledge, no such estimates have been given in the literature prior to this paper, it is worthwhile to include the proof and to present the result in a separate theorem, as follows.

\begin{theorem}\label{thmtheta} If $-\frac{3}{2} \leq \nu \leq \frac{3}{2}$, the phase function $\theta_\nu(z)$ extends analytically to the right half-plane $\Re z > 0$. For any positive integer $N$, this extended function admits the expansion
\begin{equation}\label{thetaexpexact}
\theta _\nu  (z) = z - \left( \tfrac{1}{2}\nu  + \tfrac{1}{4} \right)\pi  + \sum\limits_{n = 1}^{N - 1} \frac{t_n (\nu )}{z^{2n - 1} }  + R_N^{(\theta )} (\nu ,z)
\end{equation}
where the remainder term $R_N^{(\theta )} (\nu ,z)$ satisfies
\[
\left| R_N^{(\theta )} (\nu ,z) \right| \le \frac{\left| t_N (\nu ) \right|}{\left| z \right|^{2N - 1} } \times \begin{cases} 1 & \text{ if } \; \left|\arg z\right| \leq \frac{\pi}{4}, \\ |\csc (2\arg z)| & \text{ if } \; \frac{\pi}{4} < \left|\arg z\right| < \frac{\pi}{2}. \end{cases}
\]
In addition, if $z>0$, the remainder term does not exceed the first neglected term in absolute value and has the same sign. The coefficients $t_n(\nu)$ are polynomials in $\nu^2$ of degree $n$ and satisfy
\[
( - 1)^n t_n (\nu ) \begin{cases} >0 & \text{ if } \; -\frac{1}{2} < \nu < \frac{1}{2}, \\ =0 & \text{ if } \; \nu =\pm \frac{1}{2}, \\ <0 & \text{ if } \; \frac{1}{2} < \pm\nu \leq \frac{3}{2}. \end{cases}
\]
\end{theorem}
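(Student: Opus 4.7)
The strategy is to analytically continue $\theta_\nu(z)$ to the right half-plane via the ratio of the two Hankel functions, and then derive the truncated expansion together with its remainder bound from Stieltjes-type integral representations of $H_\nu^{(1)}(z)$ and $H_\nu^{(2)}(z)$. Since both Hankel functions are analytic and nonvanishing on $\Re z > 0$, the function
\[
\theta_\nu(z) := \frac{1}{2\im}\log\!\biggl( \frac{H_\nu^{(1)}(z)}{H_\nu^{(2)}(z)} \biggr)
\]
is analytic there with the branch of the logarithm uniquely determined by continuity with the real phase on $(0,\infty)$; it reduces to the original phase on the positive real axis by \eqref{thetaMdef}.

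For the expansion I would invoke the Schl\"afli--Poisson integral representation
\[
H_\nu^{(1)}(z) = \sqrt{\tfrac{2}{\pi z}}\,\e^{\im(z - (\nu/2+1/4)\pi)}\,\frac{1}{\Gamma(\nu + \tfrac{1}{2})}\int_0^\infty \e^{-s}\,s^{\nu-1/2}\,\bigl(1+\tfrac{s}{2\im z}\bigr)^{\nu-1/2}\,\d s,
\]
valid on $\Re z > 0$ for $-\tfrac{1}{2} < \nu \le \tfrac{3}{2}$, together with its analogue for $H_\nu^{(2)}(z)$ (involving $(1-s/(2\im z))^{\nu-1/2}$). Substituting the truncated binomial series for $(1\pm s/(2\im z))^{\nu-1/2}$ with integral remainders, combining the two representations through the defining logarithm, and matching powers of $1/z$ yields \eqref{thetaexpexact} with the $t_n(\nu)$ emerging as polynomials in $\nu^2$ of degree $n$. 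The common factor $(4\nu^2-1)=(2\nu-1)(2\nu+1)$ of every $t_n(\nu)$ is forced by the identities $\theta_{\pm 1/2}(z) = z - (\pm\tfrac{1}{4}+\tfrac{1}{4})\pi$, which come from $H_{\pm 1/2}^{(1,2)}(z)$ being elementary. The remainder $R_N^{(\theta)}(\nu,z)$ inherits an explicit integral representation with kernel of fixed sign on $s > 0$.

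The stated bounds then follow from pointwise control of this kernel. For $|\arg z|\le \tfrac{\pi}{4}$, the elementary estimate $|1\pm s/(2\im z)|\ge 1$ on $s > 0$ gives $|R_N^{(\theta)}(\nu,z)|\le |t_N(\nu)|/|z|^{2N-1}$ directly, while in the wider sector $\tfrac{\pi}{4} < |\arg z| < \tfrac{\pi}{2}$ the same integral evaluated along a ray adapted to $\arg z$ produces the $|\csc(2\arg z)|$ factor, by the mechanism governing the sharp error bounds in the large-$z$ expansion of $H_\nu^{(1)}(z)$. For $z > 0$ the integrand keeps a fixed sign, whence $R_N^{(\theta)}$ has the same sign as, and is dominated in absolute value by, the first neglected term. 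The sign pattern of $t_n(\nu)$ follows since in $[0,\tfrac{3}{2}]$ only the common factor $(4\nu^2-1)$ changes sign, the remaining factors of $t_n(\nu)$ keeping a fixed sign throughout this range; extension to $-\tfrac{3}{2}\le \nu < -\tfrac{1}{2}$ uses the reflection $\theta_{-\nu}(z)=\theta_\nu(z)+\pi\nu$ from \eqref{phaserelation}.

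The principal obstacle I anticipate is preserving the one-signed Stieltjes structure when passing from the expansions of $H_\nu^{(1)}(z)$ and $H_\nu^{(2)}(z)$ to that of $\log(H_\nu^{(1)}(z)/H_\nu^{(2)}(z))$: a naive application of the logarithm would cascade remainders of different orders and destroy both the sign and the sharp dominance statement. Overcoming this requires combining the two Schl\"afli representations at the integrand level so as to express $R_N^{(\theta)}(\nu,z)$ as a single integral over $(0,\infty)$ with a kernel of fixed sign on the positive real axis, which is the key technical step underlying the enveloping property.
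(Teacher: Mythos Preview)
Your analytic continuation via $\frac{1}{2\im}\log(H_\nu^{(1)}/H_\nu^{(2)})$ matches the paper's (which phrases it in terms of $K_\nu$), but your route to the expansion and bounds is quite different and has a real gap. The paper does \emph{not} pass through the Schl\"afli--Poisson integrals for the individual Hankel functions. Instead it applies Cauchy's integral formula to the analytic function $\Theta_\nu(z)-z$ (where $\Theta_\nu(z)=\theta_\nu(z)+(\tfrac{\nu}{2}+\tfrac14)\pi$) on a $D$-shaped contour whose straight edge lies on the imaginary axis. Letting the semicircular radius go to infinity and using the decay $\Theta_\nu(z)-z=\O(|z|^{-1})$, one obtains directly the Stieltjes-type representation
\[
\theta_\nu(z)-z+\bigl(\tfrac{\nu}{2}+\tfrac14\bigr)\pi=\frac{1}{z}\,\frac{2}{\pi}\int_0^\infty\frac{\Re\Theta_\nu\bigl(s\e^{\pi\im/2}\bigr)}{1+(s/z)^2}\,\d s,
\]
and the finite geometric expansion of $(1+(s/z)^2)^{-1}$ then gives both the coefficients $t_n(\nu)$ as moments of $\Re\Theta_\nu(s\e^{\pi\im/2})$ and the remainder $R_N^{(\theta)}$ as a single integral with a one-signed kernel. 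The sign of $\Re\Theta_\nu(s\e^{\pi\im/2})$ is established separately, from the explicit formula $\Re\Theta_\nu(s\e^{\pi\im/2})=-\tfrac12\arg\bigl(\sin(\pi\nu)+\pi I_\nu(s)/K_\nu(s)+\im\cos(\pi\nu)\bigr)$, and this single fact yields \emph{simultaneously} the sign pattern of $(-1)^n t_n(\nu)$ and the enveloping property for $z>0$.

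The gap in your proposal is exactly the obstacle you flag in your last paragraph: you have not shown how to merge the two Schl\"afli integrands through the logarithm into a single integral over $(0,\infty)$ with a one-signed kernel, and there is no evident mechanism for doing so---the logarithm of a ratio of integrals is not an integral of a logarithmic ratio. Without that step, neither the sharp bound nor the sign statement for $R_N^{(\theta)}$ follows from your representation. Relatedly, your justification for the sign of $t_n(\nu)$ (``only the common factor $4\nu^2-1$ changes sign'') is unsubstantiated: you would need to know that the cofactor $t_n(\nu)/(4\nu^2-1)$ is nonvanishing on $(\tfrac12,\tfrac32]$, which is not visible from the binomial expansion or from the recurrence. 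The paper's contour-integral approach bypasses both difficulties at once, producing the Stieltjes structure directly for $\theta_\nu$ rather than attempting to inherit it from the Hankel functions through a logarithm.
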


We are now in position to state the first main result of this paper.

\begin{theorem}\label{thmX} If $-\frac{1}{2} < \nu < \frac{1}{2}$, the function $X_\nu(w)$ extends analytically to the right half-plane $\Re w > 0$. For any positive integer $N$, this extended function admits the expansion
\begin{equation}\label{Xtruncexp}
X_\nu  (w) = w + \sum\limits_{n = 1}^{N - 1} \frac{c_n (\nu )}{w^{2n - 1} }  + R_N^{(X)} (\nu ,w)
\end{equation}
where the remainder term $R_N^{(X)} (\nu, w)$ can be bounded as follows:
\[
\left| R_N^{(X)} (\nu ,w) \right| \le \frac{\left| c_N (\nu ) \right|}{\left| w \right|^{2N - 1} } \times \begin{cases} 1 & \text{ if } \; \left|\arg w\right| \leq \frac{\pi}{4}, \\ |\csc (2\arg w)| & \text{ if } \; \frac{\pi}{4} < \left|\arg w\right| < \frac{\pi}{2}. \end{cases}
\]
In addition, if $w>0$, the remainder term does not exceed the first neglected term in absolute value and has the same sign. The coefficients $c_n(\nu)$ are polynomials in $\nu^2$ of degree $n$ with the property that $( - 1)^n c_n (\nu ) < 0$ for all $-\frac{1}{2} < \nu < \frac{1}{2}$.
\end{theorem}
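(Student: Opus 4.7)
Plan. The plan is to imitate, one level up, the strategy behind Theorem \ref{thmtheta}, treating $X_\nu$ as the inverse of the function $\Phi_\nu(z):=\theta_\nu(z)+\bigl(\tfrac{1}{2}\nu+\tfrac{1}{4}\bigr)\pi$. By Theorem \ref{thmtheta}, $\Phi_\nu$ is analytic on $\{\Re z>0\}$ with $\Phi_\nu(z)=z+\O(z^{-1})$ uniformly on any closed subsector, and $\Phi'_\nu(z)=1+\O(z^{-2})$. A Rouch\'e / open-mapping argument yields univalence on some right half-plane $\Re z>W_0$, and hence an analytic inverse $X_\nu$ there. To push the domain of $X_\nu$ all the way to $\{\Re w>0\}$ I would combine strict monotonicity of $\Phi_\nu$ on the positive real axis (from \eqref{Mtheta}) with an argument-principle count on the boundary of $\{\Re z>0,\,|z|<R\}$ as $R\to\infty$ and Schwarz reflection across the real axis; the sign information $(-1)^{n}t_n(\nu)>0$ furnished by Theorem \ref{thmtheta} for $-\tfrac{1}{2}<\nu<\tfrac{1}{2}$ enters here to preclude extra zeros of $\Phi_\nu(z)-w$ accumulating near the imaginary axis.

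Next I would derive the expansion from the Cauchy-type representation
\begin{equation*}
X_\nu(w)-w \;=\; \frac{1}{2\pi\im}\oint_{\Gamma}(z-w)\,\frac{\Phi'_\nu(z)}{\Phi_\nu(z)-w}\,\d z,
\end{equation*}
where $\Gamma$ is a simple closed contour in $\{\Re z>0\}$ encircling the unique preimage $X_\nu(w)$. Setting $\psi_\nu(z):=\Phi_\nu(z)-z$ and factorising $\Phi_\nu(z)-w=(z-w)\bigl(1+\psi_\nu(z)/(z-w)\bigr)$, I expand $\bigl(1+\psi_\nu(z)/(z-w)\bigr)^{-1}$ as a finite geometric sum plus explicit remainder. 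Deforming $\Gamma$ to a circle of radius $\sim|w|$ about $z=w$ (which still encloses $X_\nu(w)$ because $|X_\nu(w)-w|=\O(|w|^{-1})$), the residues at $z=w$ produce the truncated series \eqref{Xtruncexp}; the resulting coefficients $c_n(\nu)$ are precisely those given by formal Lagrange inversion of the $t_n(\nu)$, which exhibits each $c_n(\nu)$ as a polynomial in $\nu^{2}$ of degree $n$. The leftover integral, containing $R_N^{(\theta)}(\nu,z)$ together with the tail of the geometric series, supplies an explicit formula for $R_N^{(X)}(\nu,w)$.

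Finally, the stated bound on $R_N^{(X)}$ is obtained by inserting the bound on $R_N^{(\theta)}$ from Theorem \ref{thmtheta} and optimising the contour sector by sector: in $|\arg w|\le\pi/4$ a circular contour about $w$ already gives the clean bound $|c_N(\nu)|\,|w|^{1-2N}$, whereas in $\pi/4<|\arg w|<\pi/2$ rotating the contour toward the positive real axis produces the $|\csc(2\arg w)|$ factor, in complete parallel to the proof of Theorem \ref{thmtheta}. For $w>0$ a contour symmetric about the real axis transfers the enveloping property from $R_N^{(\theta)}$ to $R_N^{(X)}$. The sign relation $(-1)^{n}c_n(\nu)<0$ for $|\nu|<\tfrac{1}{2}$ then follows by induction on $n$ from the Lagrange inversion identities (explicitly, $c_1=-t_1$, $c_2=-t_1^{2}-t_2$, $c_3=-2t_1^{3}-4t_1t_2-t_3$, $\ldots$), whose monomials in $t_1,\dots,t_n$ all acquire the same sign once one knows $(-1)^{m}t_m(\nu)>0$. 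I expect the principal difficulty to be the uniform control of the tail of the geometric expansion so that the bound on $R_N^{(X)}$ collapses to exactly $|c_N(\nu)|\,|w|^{1-2N}$ (times the $\csc$-factor), rather than a larger constant times it; the alternating-sign structure of the Lagrange-inversion coefficients is what makes this sharp constant plausible in the first place.
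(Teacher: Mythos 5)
There are two genuine gaps. First, your route to the analytic continuation of $X_\nu$ onto the closed half-plane $\Re w\ge 0$ does not close. Rouch\'e gives univalence of $\Theta_\nu(z)=\theta_\nu(z)+(\frac{1}{2}\nu+\frac{1}{4})\pi$ only on some half-plane $\Re z>W_0$, and to run the argument principle on the boundary of $\{\Re z>0,\ |z|<R\}$ you must control the image of the \emph{finite} part of the imaginary axis under $\Theta_\nu$; the signs of the asymptotic coefficients $t_n(\nu)$ carry no information there, since they constrain $\Theta_\nu(z)$ only as $|z|\to\infty$. The step the paper supplies, and that your sketch is missing, is an explicit computation of $\Theta_\nu$ on $\arg z=\pm\frac{\pi}{2}$ in terms of $I_\nu/K_\nu$ (see \eqref{thetaonim}), showing that for $|\nu|<\frac{1}{2}$ the imaginary axis is mapped injectively onto a curve lying entirely in the open left half-plane $\Re w<0$; only then does a boundary-correspondence argument show that the range of $\Theta_\nu$ contains $\Re w\ge 0$. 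Without this, the claim that $\Theta_\nu(z)-w$ has exactly one zero in $\Re z>0$ for every $w$ with $\Re w>0$ is unsupported.

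Second, and more fundamentally, your Lagrange-inversion contour integral cannot deliver the remainder bound with the sharp constant $|c_N(\nu)|$, nor the enveloping property for $w>0$: enveloping is not preserved under functional inversion, and ``inserting the bound on $R_N^{(\theta)}$ and optimising the contour'' leaves an unquantified constant --- you flag this yourself as the principal difficulty, but it is exactly the crux. The paper's mechanism is different and is what makes the constant collapse: having extended $X_\nu$ past the imaginary axis, it applies Cauchy's theorem to $X_\nu(t)-t$ itself on a $D$-shaped contour (using $X_\nu(w)=w+\O(1/|w|)$ and the superpolynomial decay of $\Re X_\nu(\im s)$) to obtain the Stieltjes-type representation
\[
X_\nu(w)=w+\frac{1}{w}\frac{2}{\pi}\int_0^{+\infty}\frac{\Re X_\nu(\im s)}{1+(s/w)^2}\,\d s .
\]
The single sign fact $\Re X_\nu(\im s)>0$ (immediate because the range of $X_\nu$ is $\Re z>0$) then yields simultaneously the integral formula for $c_n(\nu)$, the sign $(-1)^nc_n(\nu)<0$, the bound $|R_N^{(X)}|\le |c_N(\nu)|\,|w|^{1-2N}$ times the $\csc$ factor, and the enveloping property. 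Your coefficient-sign argument, by contrast, is essentially sound: every monomial $\prod_k t_k^{a_k}$ in the Lagrange-inversion expression for $(1-2n)c_n(\nu)$ enters with a positive multinomial coefficient and has sign $(-1)^{\sum_k k a_k}=(-1)^n$, so the signs do all agree --- but this recovers only the coefficient statement, not the remainder estimates, which are the substance of the theorem.
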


Theorem \ref{Elberttheorem} follows as a corollary of this theorem by choosing $w = \beta _{\nu ,\kappa } >0$.

Let us now turn our attention to the zeros of the function $\mathscr{A}(z,\alpha )$. It is shown in Appendix \ref{Appendix2} that $\mathscr{A}( - z,\alpha )$ is expressible in terms of the modulus and phase functions of order $\frac{1}{3}$, namely
\begin{equation}\label{Aviaphase}
\mathscr{A}( - z,\alpha ) = \sqrt {\frac{z}{3}} M_{1/3} \left(\tfrac{2}{3}z^{3/2}\right)\cos \left( \theta _{1/3} \left(\tfrac{2}{3}z^{3/2}\right) + \pi \left( \alpha  + \tfrac{1}{6}\right)\right).
\end{equation}
Starting with this formula, an argument similar to that in the case of the cylinder function shows that $a_\kappa   =  - T(\gamma _\kappa  )$, where
\begin{equation}\label{Tdef}
T(w) = \left( \tfrac{3}{2}X_{1/3} \left(\tfrac{2}{3}w\right) \right)^{2/3} 
\end{equation}
and $\kappa = k-\alpha>\frac{1}{6}$, $k$ being a positive integer. In \eqref{Aviaphase} and \eqref{Tdef} the fractional powers are taking their principal values. Theorem \ref{Olvertheorem} is a special case of the following more general statement, which is our second main result.

\begin{theorem}\label{thmT} The function $T(w)$ extends analytically to the right half-plane $\Re w > 0$. For any positive integer $N$, this extended function admits the expansion
\begin{equation}\label{Texpexact}
T(w) = w^{2/3}\left( 1+ \sum\limits_{n = 1}^{N - 1} \frac{T_n}{w^{2n} }  + R_N^{(T)} (w)\right)
\end{equation}
where the remainder term $R_N^{(T)} (w)$ satisfies
\[
\left| R_N^{(T)} (w) \right| \le \frac{\left| T_N \right|}{\left| w \right|^{2N} } \times \begin{cases} 1 & \text{ if } \; \left|\arg w\right| \leq \frac{\pi}{4}, \\ |\csc (2\arg w)| & \text{ if } \; \frac{\pi}{4} < \left|\arg w\right| < \frac{\pi}{2}. \end{cases}
\]
In addition, if $w>0$, the remainder term does not exceed the first neglected term in absolute value and has the same sign. The coefficients $T_n$ are rational numbers and satisfy $( - 1)^n T_n < 0$ for all positive integer $n$.
\end{theorem}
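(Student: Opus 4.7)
The plan is to derive Theorem \ref{thmT} essentially as a corollary of Theorem \ref{thmX} specialised to $\nu=\tfrac{1}{3}$, exploiting the identity $T(w)^{3/2}=\tfrac{3}{2}X_{1/3}(\tfrac{2}{3}w)$. Set $U(w):=\tfrac{3}{2}X_{1/3}(\tfrac{2}{3}w)$. Theorem \ref{thmX} with $\nu=\tfrac{1}{3}$ supplies the analytic continuation of $U$ to the right half-plane, the truncated expansion
\[
U(w)=w+\sum_{n=1}^{N-1}\frac{(\tfrac{3}{2})^{2n}c_n(\tfrac{1}{3})}{w^{2n-1}}+\tfrac{3}{2}R_N^{(X)}\!\left(\tfrac{1}{3},\tfrac{2}{3}w\right),
\]
and the matching pointwise bound on $R_N^{(X)}$. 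My first task is to define $T(w):=U(w)^{2/3}$ (principal branch) as a single-valued holomorphic function on $\Re w>0$: since $U(w)>0$ on $(0,\infty)$ and the $N=1$ remainder bound forces $U(w)/w\to 1$ uniformly in every proper subsector as $|w|\to\infty$, the function $U$ avoids $(-\infty,0]$ throughout $\Re w>0$ --- a short Rouch\'e step takes care of bounded $|w|$ --- so $T$ is well-defined and holomorphic.

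Next I would pin down the coefficients $T_n$ and their signs. Writing $U(w)/w=1+\sum_{n\geq 1}\tilde c_n w^{-2n}$ with $\tilde c_n:=(\tfrac{3}{2})^{2n}c_n(\tfrac{1}{3})$ and expanding $T(w)=w^{2/3}(U(w)/w)^{2/3}$ by the binomial series yields
\[
T_n=\sum_{k=1}^{n}\binom{2/3}{k}\sum_{\substack{m_1+\cdots+m_k=n\\ m_i\geq 1}}\tilde c_{m_1}\cdots\tilde c_{m_k}.
\]
Uniqueness of asymptotic expansions identifies these with the coefficients in \eqref{airyzerosasymp} computed in Appendix \ref{Appendix}, and the recursion shows that $T_n\in\mathbb{Q}$. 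Inspecting signs, Theorem \ref{thmX} gives $\operatorname{sign}\tilde c_m=(-1)^{m+1}$, while $\operatorname{sign}\binom{2/3}{k}=(-1)^{k+1}$ for every $k\geq 1$, so each term of the double sum carries sign $(-1)^{n+1}$ and $(-1)^nT_n<0$ follows without cancellation.

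For the remainder estimate I would mirror the contour-integral step of the proof of Theorem \ref{thmX}, applied now to
\[
g(w):=w^{-2/3}T(w)-1-\sum_{n=1}^{N-1}\frac{T_n}{w^{2n}},
\]
which is holomorphic on $\Re w>0$, real on $(0,\infty)$, and $O(|w|^{-2N})$ in every proper subsector by the first two steps. A Cauchy representation of $g(w)$ as an integral over the imaginary axis against a kernel of the form $s^{2N}/(s^2-w^2)$ should yield the two-regime bound, with the factor $|\csc(2\arg w)|$ arising from the distance of $w$ to the imaginary axis. The sharp constant $1$ multiplying $|T_N|/|w|^{2N}$ and the enveloping statement on $w>0$ will then follow from the sign information of step two: the integrand along the imaginary axis has constant sign (dictated by the parity of $N$), so the bound is saturated at leading order, with matching sign between $R_N^{(T)}(w)$ and $T_N/w^{2N}$ on the positive real axis. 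The main obstacle, exactly as in Theorem \ref{thmX}, will be this final step --- verifying the sharp constant and the sign structure --- which requires careful control of the boundary behaviour of $T$ on the imaginary axis, ultimately piggybacking on the analogous boundary estimates for $X_{1/3}$ already produced inside the proof of Theorem \ref{thmX}.
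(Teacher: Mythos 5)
Your overall architecture matches the paper's: continue $T$ analytically through $X_{1/3}$, represent $w^{-2/3}T(w)-1$ by a Cauchy integral over the imaginary axis, and extract both the two-regime bound and the enveloping property from sign-definite boundary data. Your step two, however, is a genuinely different (and correct) route to $(-1)^nT_n<0$: the paper obtains the sign from the moment formula $T_n=(-1)^n\tfrac{2}{\pi}\int_0^{+\infty}s^{2n-5/3}\,\Im\bigl(\e^{-\frac{\pi}{3}\im}T(\im s)\bigr)\,\d s$, whereas your binomial/multinomial expansion of $(1+\sum\tilde c_m w^{-2m})^{2/3}$ shows that every term of $T_n$ carries the sign $(-1)^{n+1}$ because $\operatorname{sign}\binom{2/3}{k}=(-1)^{k+1}$ and $\operatorname{sign}\tilde c_m=(-1)^{m+1}$; this is a clean, self-contained argument that only uses Theorem \ref{thmX}.

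The genuine gap is in your final step. You assert that "the integrand along the imaginary axis has constant sign (dictated by the parity of $N$)" and that this "will follow from the sign information of step two." It will not: knowing the signs of all the coefficients $T_n$ does not determine the sign of the boundary values of the function in the Cauchy representation (alternating moments of a signed density are perfectly possible), and enveloping is not preserved under composition with $u\mapsto u^{2/3}$, so it cannot be inherited algebraically from Theorem \ref{thmX} either. What is actually needed — and what the paper isolates as Lemma \ref{lemma7} — is the pointwise inequality $\Im\bigl(\e^{-\frac{\pi}{3}\im}T(\im s)\bigr)<0$ for all $s>0$, together with its super-polynomial decay as $s\to+\infty$ (without which the moment integrals defining $T_N$ need not converge). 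The inequality does follow from the boundary information you allude to, but by a specific geometric argument you never carry out: since the range of $X_{1/3}$ is the right half-plane, $\arg X_{1/3}\bigl(\tfrac{2}{3}\im s\bigr)\in(-\tfrac{\pi}{2},\tfrac{\pi}{2})$, hence $\arg T(\im s)=\tfrac{2}{3}\arg X_{1/3}\bigl(\tfrac{2}{3}\im s\bigr)\in(-\tfrac{\pi}{3},\tfrac{\pi}{3})$ and $\arg\bigl(\e^{-\frac{\pi}{3}\im}T(\im s)\bigr)\in(-\tfrac{2\pi}{3},0)$. The decay statement likewise requires the quantitative estimates $\Re X_{1/3}(\im s)=o(s^{-r})$ and $\Im X_{1/3}(\im s)\sim\tfrac{2}{3}s$ from Lemma \ref{lemma6}. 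Until these two boundary facts are proved, the remainder bound, the sharp constant, and the enveloping claim for $w>0$ are all unsupported.
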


\paragraph{\textbf{Remark.}} Following the notation in \cite{Fabijonas2004} and \cite[\href{http://dlmf.nist.gov/9.8}{\S9.8}]{DLMF}, a representation for the function $\mathscr{A}(z,\alpha )$  analogous to \eqref{Cphasemodulus} is as follows:
\[
\mathscr{A}(z,\alpha ) = M(z)\sin (\theta (z) + \pi \alpha ),
\]
where, by \eqref{Aviaphase}, 
\[
\theta (z) = \theta _{1/3} \left( \tfrac{2}{3}( - z)^{3/2} \right) + \tfrac{2\pi}{3} \quad \text{and} \quad M(z) = \sqrt { - \frac{z}{3}} M_{1/3} \left( \tfrac{2}{3}( - z)^{3/2} \right)
\]
for any $z<0$. Similarly, the derivative of $\mathscr{A}(z,\alpha )$ may be expressed in the form
\[
\mathscr{A}'(z,\alpha ) = N(z)\sin (\phi (z) + \pi \alpha )
\]
(compare \cite[\href{http://dlmf.nist.gov/9.8.E5}{Eq. 9.8.5} and \href{http://dlmf.nist.gov/9.8.E6}{Eq. 9.8.6}]{DLMF}), where
\[
\phi (z) = \theta _{2/3} \left( \tfrac{2}{3}( - z)^{3/2} \right) + \tfrac{\pi}{3} \quad \text{and} \quad N(z) =  - \frac{z}{\sqrt 3}M_{2/3} \left( \tfrac{2}{3}( - z)^{3/2} \right),
\]
and $z$ is any negative real number. The proof of this representation relies on the known relation between the derivatives of the Airy functions and the Bessel functions of order $\pm \frac{2}{3}$ (see, \cite[\href{http://dlmf.nist.gov/9.6.E7}{Eq. 9.6.7} and \href{http://dlmf.nist.gov/9.6.E9}{Eq. 9.6.9}]{DLMF}), and is along the lines of the proof of \eqref{Aviaphase}.

\smallskip

\paragraph{\textbf{Remark.}} For asymptotic expansions, there are two main ways of deriving error bounds. One set of methods starts with an integral representation (see, e.g., \cite{Bennett2018,Wong1980}), the other uses linear differential equations \cite{Olver19972}. The difficulty of our problems is that the implicitly defined functions $\theta_\nu(z)$, $X_\nu(w)$ and $T(w)$ neither have integral representations to start with nor do they satisfy any linear differential equation.

\section{Proof of Theorem \ref{thmtheta}}\label{Section3}

In this section, we prove Theorem \ref{thmtheta}. We begin by stating and proving two lemmata.

\begin{lemma}\label{lemma1} When $-\frac{3}{2} \leq \nu \leq \frac{3}{2}$, the phase function $\theta_\nu(z)$ has a holomorphic extension to the half-plane $\Re z> 0$. Moreover, if $-\frac{3}{2} < \nu < \frac{3}{2}$, this function extends continuously to the boundary $\Re z = 0$.
\end{lemma}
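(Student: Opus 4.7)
The strategy is to analytically continue $\theta_\nu(z)$ by expressing it explicitly through Hankel functions, for which the complex-analytic properties are well documented. For real $\nu$ and $z>0$, the Schwarz reflection $H_\nu^{(2)}(z)=\overline{H_\nu^{(1)}(z)}$ combined with the definition \eqref{thetaMdef} and the positivity of $M_\nu$ yields
\[
\e^{2\im\theta_\nu(z)} \;=\; \frac{H_\nu^{(1)}(z)}{H_\nu^{(2)}(z)}.
\]
I would therefore take as definition of the analytic continuation
\[
\theta_\nu(z) \;:=\; -\tfrac{\im}{2}\log\!\frac{H_\nu^{(1)}(z)}{H_\nu^{(2)}(z)},
\]
with the branch of the logarithm fixed by the normalisation $\theta_\nu(0^+)=-\pi/2$. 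Since the right half-plane is simply connected and both Hankel functions are holomorphic on $\mathbb{C}\setminus(-\infty,0]$, a single-valued holomorphic branch of $\log$ exists on $\{\Re z>0\}$ as soon as each Hankel function is nonvanishing there.

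The only nontrivial point is therefore the zero-free statement
\[
H_\nu^{(1)}(z)\neq 0 \quad \text{and} \quad H_\nu^{(2)}(z)\neq 0 \qquad \big(\Re z>0,\; |\nu|\le \tfrac{3}{2}\big),
\]
sharpened to $\Re z\ge 0$, $z\neq 0$, in the strict range $|\nu|<\tfrac{3}{2}$. This is the main obstacle, and I would settle it by invoking the classical localisation of complex zeros of Hankel functions of real order (Watson \cite{Watson1944}, Ch.~XV §15.7): all zeros of $H_\nu^{(1)}$ in $|\arg z|<\pi$ lie in the closed sector $-\pi<\arg z\le -\pi/2$, and the bounding ray $\arg z=-\pi/2$ is attained only at the critical orders $|\nu|=\tfrac{3}{2}$, where the elementary closed form $H_{3/2}^{(1)}(z)=-\sqrt{2/(\pi z)}\,\e^{\im z}(1+\im/z)$ exhibits a zero at $z=-\im$ (the case $\nu=-\tfrac{3}{2}$ reduces to this via \eqref{phaserelation}). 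Schwarz reflection then places the zeros of $H_\nu^{(2)}$ in $\pi/2\le\arg z<\pi$, touching $\Re z=0$ only at $z=+\im$ when $|\nu|=\tfrac{3}{2}$. This gives exactly the split between the holomorphic extension (closed range) and the continuous boundary extension (open range) as stated.

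Holomorphicity on $\Re z>0$ is then immediate from the logarithmic definition. For the continuous boundary extension when $|\nu|<\tfrac{3}{2}$, I would use the integral form
\[
\theta_\nu(z) \;=\; -\tfrac{\pi}{2} \;+\; \int_0^z \frac{2\,\d t}{\pi t\,H_\nu^{(1)}(t)H_\nu^{(2)}(t)},
\]
the integrand being the unambiguous analytic continuation of $2/(\pi t M_\nu^2(t))$, cf.~\eqref{Mtheta}. The path may be taken in the closed right half-plane; the integrand is continuous away from the origin (by the zero-freeness just established) and integrable near $0$, since the small-argument forms of $J_\nu,Y_\nu$ give $M_\nu^2(t)\sim (\Gamma(|\nu|)/\pi)^2(t/2)^{-2|\nu|}$ for $\nu\neq 0$ and $M_0^2(t)\sim(2/\pi)^2\log^2 t$, so the integrand is $O(t^{2|\nu|-1})$, respectively $O(1/(t\log^2 t))$. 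A routine dominated-convergence argument then transfers continuity up to the imaginary axis, and the case $\nu<0$ is folded into $\nu>0$ via $\theta_{-\nu}(z)=\theta_\nu(z)+\pi\nu$ from \eqref{phaserelation}.
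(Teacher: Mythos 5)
Your proposal is correct and takes essentially the same route as the paper: the extension is defined as $\tfrac{1}{2\im}\log\bigl(H_\nu^{(1)}(z)/H_\nu^{(2)}(z)\bigr)$ on the simply connected half-plane, with the required zero-freeness drawn from Watson, Ch.~XV, \S 15.7 --- the paper phrases this for $K_\nu(z)$ in $|\arg z|<\pi$, which is exactly your Hankel-function localisation after the rotation $H_\nu^{(1)}(z)\propto K_\nu\bigl(z\e^{-\frac{\pi}{2}\im}\bigr)$. The one point where you genuinely differ is continuity at $z=0$: the paper reads the limit $-\frac{\pi}{2}(\nu-|\nu|+1)$ off the small-argument behaviour of $K_\nu$ in the logarithmic formula, whereas you integrate $\theta_\nu'(t)=2/\bigl(\pi t\,H_\nu^{(1)}(t)H_\nu^{(2)}(t)\bigr)$ from the origin and verify integrability ($O(|t|^{2|\nu|-1})$, resp.\ $O(1/(|t|\log^2|t|))$); both arguments are sound.
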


\begin{proof} Suppose that $z>0$. Using the definition of $\theta _\nu  (z)$ and the known relation between the Hankel functions and the modified Bessel function of the second kind \cite[\href{http://dlmf.nist.gov/10.27.E8}{Eq. 10.27.8}]{DLMF}, we derive the representation\vspace{-3pt}
\[
\e^{2\im\theta _\nu  (z)}  = \frac{M_\nu  (z)\e^{\im\theta _\nu  (z)} }{M_\nu  (z)\e^{ - \im\theta _\nu  (z)} } = \frac{H_\nu ^{(1)} (z)}{\overline{H_\nu ^{(1)} (z)}} = \frac{H_\nu ^{(1)} (z)}{H_\nu ^{(2)} (z)} =  \e^{ - \pi \im(\nu+1) } \frac{K_\nu  \left( z\e^{ - \frac{\pi }{2}\im}  \right)}{K_\nu  \left( z\e^{\frac{\pi }{2}\im} \right)}.
\]
The choice $\arg (-1)=-\pi$ has been made so as to match the limiting value of $\theta _\nu  (z)$ at the origin (see \eqref{thetalimit} below). Since $K_\nu(z)$ has no zeros when $-\frac{3}{2} \leq \nu \leq \frac{3}{2}$ and $|\arg z|< \pi$ \cite[Ch. XV, \S15.7]{Watson1944}, the rightmost fraction has a holomorphic logarithm in the (simply connected) domain $\Re z>0$. Accordingly,
\begin{equation}\label{thetaext}
\theta _\nu  (z) = \frac{1}{2\im}\log \left( \e^{ - \pi \im(\nu+1) } \frac{K_\nu  \left( z\e^{ - \frac{\pi }{2}\im}  \right)}{K_\nu  \left( z\e^{\frac{\pi }{2}\im}\right)} \right)
\end{equation}
yields the required holomorphic extension of the phase function $\theta_\nu(z)$ to the right half-plane $\Re z>0$. Here, $\log$ stands for the principal branch of the logarithm. Again, the branch of the logarithm has been chosen so as to match the limiting value of $\theta _\nu  (z)$ at $z=0$ (cf. \eqref{thetalimit} below). Furthermore, if $\nu \neq \pm\frac{3}{2}$, $K_\nu(z)$ does not vanish on the rays $\arg z =\pm \pi$ either. Thus, formula \eqref{thetaext} also defines a continuous function of $z$ when $-\frac{3}{2} < \nu < \frac{3}{2}$, $\Re z\geq 0$ and $z\neq 0$. To verify the continuity at the origin, one may use the known behaviour of $K_\nu(z)$ near $z=0$ \cite[\href{http://dlmf.nist.gov/10.30.i}{\S10.30(i)}]{DLMF} to derive
\begin{equation}\label{thetalimit}
\mathop {\lim }\limits_{z \to 0} \frac{1}{2\im}\log \left( \e^{ - \pi \im(\nu+1) } \frac{K_\nu  \left( z\e^{ - \frac{\pi }{2}\im}  \right)}{K_\nu  \left( z\e^{\frac{\pi }{2}\im}\right)} \right) = \frac{1}{2\im}\log (\e^{ - \pi \im(\nu - \left| \nu  \right| + 1)}  ) = - \frac{\pi }{2}(\nu  - \left| \nu  \right| + 1),
\end{equation}
in accordance with the definition of $\theta_\nu (z)$.
\end{proof}

\paragraph{\textbf{Remark.}} Since for $z>0$,
\begin{align*}
H_{ \pm 3/2}^{(1)} (z) & = \sqrt {\frac{2}{\pi z}} \exp \left( \im z - \left(  \pm \tfrac{3}{4} + \tfrac{1}{4} \right)\pi \im \right)\left( 1 + \frac{\im}{z} \right) \\ & = \sqrt {\frac{2}{\pi z}\left( 1 + \frac{1}{z^2} \right)} \exp \left( \im\left( z - \left(  \pm \tfrac{3}{4} + \tfrac{1}{4} \right)\pi  + \arctan \left( \frac{1}{z} \right) \right) \right),
\end{align*}
it follows that
\begin{equation}\label{32case}
\theta _{ \pm 3/2} (z) = z - \left(  \pm \tfrac{3}{4} + \tfrac{1}{4} \right)\pi  + \arctan \left( \frac{1}{z} \right).
\end{equation}
This formula gives the analytic extensions of the phase functions $\theta _{ \pm 3/2} (z) $ to the half-plane $\Re z> 0$ and indicates that they possess branch point singularities located at $z=\im$ and $z=-\im$ (which are zeros of $H_{ \pm 3/2}^{(2)} (z)$ and $H_{ \pm 3/2}^{(1)} (z)$, respectively).

At this point it proves convenient to introduce the function $\Theta _\nu  (z)$ via
\begin{equation}\label{Thetadef}
\Theta _\nu  (z) = \theta _\nu  (z) + \left( \tfrac{1}{2}\nu  + \tfrac{1}{4} \right)\pi .
\end{equation}
Clearly, this function inherits the continuity and analytic properties of $\theta _\nu  (z)$ stated in Lemma \ref{lemma1}. In particular,
\begin{equation}\label{Thetaext}
\Theta _\nu  (z) = \frac{1}{2\im}\log \left( \e^{ - \frac{\pi }{2}\im} \frac{K_\nu  \left( z\e^{ - \frac{\pi }{2}\im}  \right)}{K_\nu  \left( z\e^{\frac{\pi }{2}\im} \right)} \right)
\end{equation}
for $-\frac{3}{2} < \nu < \frac{3}{2}$ and $|\arg z|\leq \frac{\pi}{2}$. Note that the modified Bessel function of the second kind is an even function of its order $\nu$ and so is $\Theta _\nu  (z)$.

\begin{lemma}\label{lemma2} Assume that $-\frac{3}{2} < \nu < \frac{3}{2}$. Then for any $s>0$,
\begin{equation}\label{retheta}
\Re \Theta _\nu  \left( s\e^{\frac{\pi }{2}\im} \right) \begin{cases} <0 & \text{ if } \; -\frac{1}{2} < \nu < \frac{1}{2}, \\ =0 & \text{ if } \; \nu =\pm \frac{1}{2}, \\ >0 & \text{ if } \; \frac{1}{2} < \pm\nu < \frac{3}{2}. \end{cases}
\end{equation}
Furthermore,
\begin{equation}\label{thetaasymp}
\Theta _\nu  (z) =  z + \O\left( \frac{1}{\left| z \right|} \right)
\end{equation}
as $|z| \to +\infty$ in the closed sector $|\arg z|\leq \frac{\pi}{2}$, and
\begin{equation}\label{rethetaasymp}
\Re \Theta _\nu  \left( s\e^{\frac{\pi }{2}\im} \right) =  - \frac{\cos (\pi \nu )}{2}\e^{ - 2s} (1+o(1))
\end{equation}
as $s\to +\infty$.  
\end{lemma}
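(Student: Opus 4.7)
The strategy is to substitute $z = s\e^{\im\pi/2}$ (so that $z\e^{-\im\pi/2} = s$ and $z\e^{\im\pi/2} = s\e^{\im\pi}$) into the representation \eqref{Thetaext} and analyse the real part of the resulting logarithm by tracking the location of $K_\nu(s\e^{\im\pi})$ in the complex plane. The key input is the continuation identity
\[
K_\nu(s\e^{\im\pi}) = \cos(\pi\nu)\,K_\nu(s) - \tfrac{\im\pi}{2}\bigl(I_\nu(s) + I_{-\nu}(s)\bigr), \qquad s > 0,
\]
which I would derive from $K_\nu(w) = \frac{\pi}{2\sin(\pi\nu)}(I_{-\nu}(w) - I_\nu(w))$ combined with $I_\nu(w\e^{\im\pi}) = \e^{\im\pi\nu}I_\nu(w)$ (and a limiting argument for integer $\nu$). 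Since $K_\nu(s) > 0$, formula \eqref{Thetaext} then yields
\[
\Re \Theta_\nu(s\e^{\im\pi/2}) = -\tfrac{\pi}{4} - \tfrac{1}{2}\arg K_\nu(s\e^{\im\pi}),
\]
where $\arg$ is the continuous branch anchored by the boundary value $\arg K_\nu(s\e^{\im\pi}) \to -\pi|\nu|$ as $s \to 0^+$, which one reads off from $K_\nu(w) \sim \frac{1}{2}\Gamma(|\nu|)(w/2)^{-|\nu|}$.

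For \eqref{retheta}, I would exploit that $\Re K_\nu(s\e^{\im\pi}) = \cos(\pi\nu) K_\nu(s)$ carries a fixed sign determined solely by $\cos(\pi\nu)$. Consequently the curve $s \mapsto K_\nu(s\e^{\im\pi})$ is confined to the open right half-plane when $|\nu| < \frac{1}{2}$, lies on the imaginary axis when $|\nu| = \frac{1}{2}$, and stays in the open left half-plane when $\frac{1}{2} < |\nu| < \frac{3}{2}$. Since $K_\nu(s\e^{\im\pi})$ never vanishes, the continuous $\arg$ starting at $-\pi|\nu|$ is forced to remain in $(-\pi/2,\pi/2)$, equal $-\pi/2$ (upon checking $\Im K_{\pm 1/2}(s\e^{\im\pi}) < 0$, which follows from $I_{\pm 1/2}(s) > 0$), and lie in $(-3\pi/2,-\pi/2)$, respectively. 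Substituting into the formula above places $\Re \Theta_\nu(s\e^{\im\pi/2})$ in $(-\pi/2,0)$, at $0$, and in $(0,\pi/2)$, which is precisely \eqref{retheta}.

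For \eqref{thetaasymp}, I would insert the standard expansion $K_\nu(w) \sim \sqrt{\pi/(2w)}\,\e^{-w}(1 + \O(|w|^{-1}))$, valid uniformly for $|\arg w| \le \pi$ (and therefore applicable to $w = z\e^{\mp\im\pi/2}$ whenever $|\arg z| \le \pi/2$), into \eqref{Thetaext}. The algebraic prefactors in the ratio combine with the explicit $\e^{-\im\pi/2}$ to cancel, and the exponentials collapse to $\e^{2\im z}$, leaving $\e^{2\im z}(1 + \O(|z|^{-1}))$ inside the logarithm; taking the analytic branch consistent with Lemma \ref{lemma1} and dividing by $2\im$ yields \eqref{thetaasymp}. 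For \eqref{rethetaasymp}, I would use the sharper large-$s$ asymptotics $K_\nu(s) \sim \sqrt{\pi/(2s)}\,\e^{-s}$ and $I_{\pm\nu}(s) \sim \e^{s}/\sqrt{2\pi s}$; a direct cancellation of prefactors gives $\Re K_\nu(s\e^{\im\pi})/|\Im K_\nu(s\e^{\im\pi})| = \cos(\pi\nu)\e^{-2s}(1+o(1))$. Since $K_\nu(s\e^{\im\pi})$ tends to the negative imaginary axis, the continuous argument satisfies $\arg K_\nu(s\e^{\im\pi}) = -\pi/2 + \cos(\pi\nu)\e^{-2s}(1+o(1))$, and substituting this into the formula for $\Re \Theta_\nu(s\e^{\im\pi/2})$ produces \eqref{rethetaasymp}.

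The main obstacle I expect is the branch bookkeeping for $\frac{1}{2} < |\nu| < \frac{3}{2}$: the initial value $-\pi|\nu|$ can fall outside the principal range $(-\pi,\pi]$, and for $|\nu| \in (1, \frac{3}{2})$ the curve $s \mapsto K_\nu(s\e^{\im\pi})$ in fact crosses the negative real axis when $I_\nu(s) + I_{-\nu}(s)$ changes sign. One must verify that the continuous determination of $\arg$ employed throughout agrees with the analytic extension of $\Theta_\nu$ prescribed in Lemma \ref{lemma1}, rather than differing by a shift of $2\pi$.
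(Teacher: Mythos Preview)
Your approach is correct and shares the paper's overall strategy: evaluate \eqref{Thetaext} at $z=s\e^{\im\pi/2}$, identify a component of the logarithm's argument whose sign is controlled by $\cos(\pi\nu)$, and read off the sign of $\Re\Theta_\nu$. The difference lies in which continuation identity you use. You take
\[
K_\nu(s\e^{\im\pi}) = \cos(\pi\nu)\,K_\nu(s) - \tfrac{\im\pi}{2}\bigl(I_\nu(s)+I_{-\nu}(s)\bigr),
\]
so that the \emph{real} part of $K_\nu(s\e^{\im\pi})$ is proportional to $\cos(\pi\nu)$ and hence of fixed sign; confinement of the curve to an open half-plane then pins the continuous argument to a $\pi$-wide interval. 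The paper instead divides through by $K_\nu(s)$, using $K_\nu(s\e^{\im\pi}) = \e^{-\im\pi\nu}K_\nu(s) - \im\pi I_\nu(s)$, to arrive at
\[
\Re\Theta_\nu\bigl(s\e^{\im\pi/2}\bigr) = -\tfrac{1}{2}\arg\Bigl(\sin(\pi\nu) + \pi\,\tfrac{I_\nu(s)}{K_\nu(s)} + \im\cos(\pi\nu)\Bigr),
\]
so that the \emph{imaginary} part of the argument equals the constant $\cos(\pi\nu)$. This buys a cleaner endgame: the curve is a horizontal line in the complex plane, it never touches the branch cut of the principal logarithm, and positivity of $I_\nu/K_\nu$ is the only Bessel-function input needed. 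The obstacle you rightly flag---matching the continuous $\arg$ to the analytic branch fixed by Lemma~\ref{lemma1} when $1<|\nu|<\tfrac32$---simply does not arise in the paper's formulation. Your route still closes: the anchoring value $-\pi|\nu|$ does match $\Re\Theta_\nu(0)=\tfrac{\pi|\nu|}{2}-\tfrac{\pi}{4}$ from \eqref{thetalimit}, and since $\Re K_\nu(s\e^{\im\pi})\ne0$ the continuous argument stays in $(-\tfrac{3\pi}{2},-\tfrac{\pi}{2})$ even across the negative-real-axis crossing you mention. For \eqref{thetaasymp} and \eqref{rethetaasymp} your argument coincides with the paper's.
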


\begin{proof} Since $\Theta_\nu (z)$ is an even function of $\nu$, it is enough to consider the case $0 \leq \nu < \frac{3}{2}$. From \eqref{Thetaext} and the known analytic continuation formula for the modified Bessel function of the second kind \cite[\href{http://dlmf.nist.gov/10.34.E2}{Eq. 10.34.2}]{DLMF}, we deduce
\begin{gather}\label{thetaonim}
\begin{split}
\Theta _\nu  \left( s\e^{\frac{\pi }{2}\im} \right) & = \frac{1}{2\im}\log \left( \e^{ - \frac{\pi}{2} \im} \frac{K_\nu  (s)}{K_\nu  (s\e^{\pi \im} )} \right) = \frac{1}{2\im}\log \left( \e^{ - \frac{\pi}{2} \im} \frac{K_\nu  (s)}{\e^{ - \pi \im\nu } K_\nu  (s) - \pi \im I_\nu  (s)} \right)
\\ & = - \frac{1}{2\im}\log \left( \e^{ - \pi \im\left( \nu  - \frac{1}{2} \right)}  + \pi \frac{I_\nu  (s)}{K_\nu  (s)} \right) \\ & =  - \frac{1}{2\im}\log \left( \sin (\pi \nu ) + \pi \frac{I_\nu  (s)}{K_\nu  (s)} + \im\cos (\pi \nu ) \right)
\end{split}
\end{gather}
for all $s>0$. Here, $I_\nu(s)$ denotes the modified Bessel function of the first kind \cite[\href{http://dlmf.nist.gov/10.25.ii}{\S10.25(ii)}]{DLMF}. Consequently,
\begin{equation}\label{rethetaformula}
\Re \Theta _\nu  \left( s\e^{\frac{\pi }{2}\im} \right) =  - \frac{1}{2}\arg \left( \sin (\pi \nu ) + \pi \frac{I_\nu  (s)}{K_\nu  (s)} + \im\cos (\pi \nu ) \right).
\end{equation}
The desired result \eqref{retheta} now follows readily, since $\frac{I_\nu  (s)}{K_\nu  (s)}$ is real and positive for all $0 \leq \nu < \frac{3}{2}$ and $s>0$ \cite[\href{http://dlmf.nist.gov/10.37}{\S10.37}]{DLMF}.

We proceed with the proof of \eqref{thetaasymp}. It is well known that
\[
K_\nu  (z) = \sqrt {\frac{\pi }{2z}} \e^{ - z} \left( 1 + \O\left( \frac{1}{\left| z \right|}\right) \right)
\]
as $|z|\to +\infty$ in the closed sector $\left| \arg z \right| \le \pi$ (see, for instance, \cite[\href{http://dlmf.nist.gov/10.40.E2}{Eq. 10.40.2}]{DLMF}). Combining this asymptotic formula with \eqref{Thetaext} and simplifying shows that
\[
\Theta _\nu  (z) = z + \log \left( 1 + \O\left( \frac{1}{\left| z \right|} \right) \right) = z + \O\left( \frac{1}{\left| z \right|} \right)
\]
as $|z|\to +\infty$ in $\left| \arg z \right| \le \frac{\pi}{2}$.

Finally, the leading order asymptotics of the modified Bessel functions \cite[\href{http://dlmf.nist.gov/10.40.i}{\S10.40(i)}]{DLMF}, combined with \eqref{rethetaformula}, give
\[
\Re \Theta _\nu  \left( s\e^{\frac{\pi }{2}\im} \right) =  - \frac{1}{2}\arctan \left( \frac{\cos (\pi \nu )}{\sin (\pi \nu ) + \e^{2s}(1+o(1)) } \right) =  - \frac{\cos (\pi \nu )}{2}\e^{ - 2s} (1+o(1))
\]
as $s\to +\infty$.    
\end{proof}

\begin{figure}[!ht]
	\centering
		\includegraphics[width=0.26\textwidth]{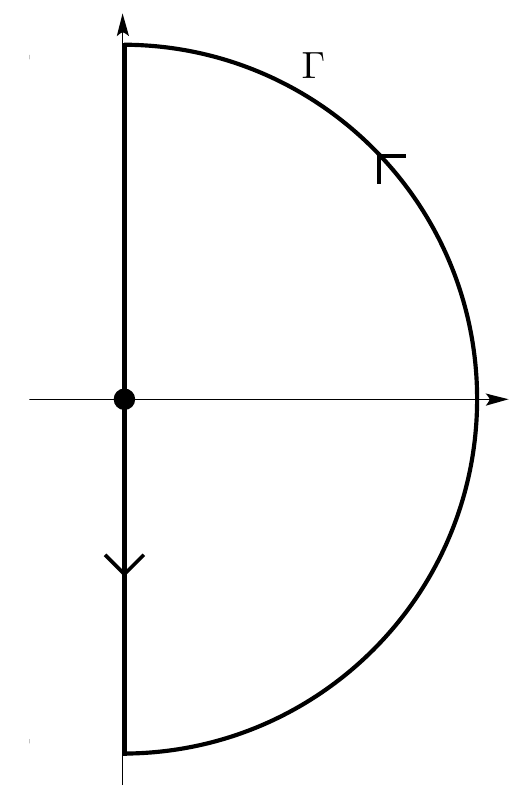}
		\caption{The contour of integration $\Gamma$.}
		\label{fig1}
\end{figure}

\begin{proof}[Proof of Theorem \ref{thmtheta}] Let us first consider the case that $-\frac{3}{2}< \nu < \frac{3}{2}$. Let $\Gamma$ be the $D$-shaped, positively oriented closed contour depicted in Figure \ref{fig1}. By Cauchy's integral theorem in its most general form \cite[Ch. 2, \S2.3, Theorem 2.4]{Kodaira2007}, we can write
\[
\theta _\nu  (z) - z + \left( \tfrac{1}{2}\nu  + \tfrac{1}{4} \right)\pi  = \Theta_\nu  (z) -z= \frac{1}{2\pi \im}\oint_\Gamma \frac{\Theta _\nu  (t)-t}{t - z}\d t  - \underbrace {\frac{1}{2\pi \im}\oint_\Gamma \frac{\Theta _\nu  (t) -t}{t + z}\d t }_0
\]
for all $z>0$ lying inside the contour $\Gamma$. From \eqref{thetaasymp} it is seen that when the radius of the semicircular portion of the contour $\Gamma$ tends to $+\infty$, the integrals along it tend to zero. Accordingly,
\begin{align*}
\theta _\nu  (z) - z + \left( \tfrac{1}{2}\nu  + \tfrac{1}{4} \right)\pi  =\; & \frac{1}{2\pi \im }\int_{ + \im \infty }^0 \frac{\Theta _\nu  (t) -t}{t - z}\d t  + \frac{1}{2\pi \im }\int_0^{ - \im \infty } \frac{\Theta _\nu  (t) -t}{t - z}\d t
\\ & - \frac{1}{2\pi \im }\int_{ + \im \infty }^0 \frac{\Theta _\nu  (t) -t}{t + z}\d t  - \frac{1}{2\pi \im }\int_0^{ - \im \infty } \frac{\Theta _\nu  (t) -t}{t + z}\d t
\\ =\; &  - \frac{1}{2\pi }\int_0^{ + \infty } \frac{\Theta _\nu  \left( s\e^{\frac{\pi}{2}\im } \right) -\im s}{\im s - z}\d s  + \frac{1}{2\pi}\int_0^{ + \infty } \frac{\Theta _\nu  \left( s\e^{ - \frac{\pi}{2}\im } \right) +\im s}{\im s + z}\d s 
\\ & + \frac{1}{2\pi}\int_0^{ + \infty } \frac{\Theta _\nu  \left( s\e^{\frac{\pi}{2}\im }\right)-\im s}{\im s + z}\d t  - \frac{1}{2\pi}\int_0^{ + \infty } \frac{\Theta _\nu  \left( s\e^{ - \frac{\pi}{2}\im } \right) +\im s}{\im s - z}\d s 
\\ =\; & \frac{1}{z}\frac{2}{\pi}\int_0^{ + \infty } \frac{\Re \Theta _\nu  \left( s\e^{\frac{\pi}{2}\im} \right)}{1 + (s/z)^2}\d s .
\end{align*}
In the last step, use has been made of the fact that
\[
2\Re \Theta _\nu  \left( s \e^{\frac{\pi }{2}\im } \right) = \Theta _\nu  \left( s \e^{\frac{\pi }{2}\im } \right) + \Theta _\nu  \left( s \e^{ - \frac{\pi }{2}\im } \right)
\]
for all $s>0$, a consequence of the Schwarz reflection principle. The restriction on $z$ can now be removed by an appeal to analytic continuation, and thus,
\begin{equation}\label{thetaexact}
\theta _\nu  (z) = z - \left( \tfrac{1}{2}\nu  + \tfrac{1}{4} \right)\pi + \frac{1}{z}\frac{2}{\pi}\int_0^{ + \infty } \frac{\Re \Theta _\nu  \left( s\e^{\frac{\pi}{2}\im} \right)}{1 + (s/z)^2}\d s
\end{equation}
provided $\Re z>0$. Note that, by \eqref{rethetaasymp}, $\Re \Theta _\nu  \left( s\e^{\frac{\pi}{2}\im} \right)$ decays faster than any negative power of $s$ as $s\to +\infty$. Therefore, for any positive integer $N$, $\Re z>0$ and $s>0$, we can expand the integrand in \eqref{thetaexact} using
\begin{equation}\label{geometric}
\frac{1}{1 + (s/z)^2 } = \sum\limits_{n = 1}^{N - 1} \frac{1}{z^{2n - 2}}( - 1)^{n - 1} s^{2n - 2}  + \frac{1}{z^{2N - 2}}( - 1)^{N - 1} \frac{s^{2N - 2} }{1 + (s/z)^2 },
\end{equation}
to obtain the truncated expansion \eqref{thetaexpexact} with
\begin{equation}\label{tcoeffint}
t_n (\nu ) = ( - 1)^{n - 1} \frac{2}{\pi}\int_0^{ + \infty } s^{2n - 2} \Re \Theta _\nu  \left( s\e^{\frac{\pi }{2}\im}  \right)\d s 
\end{equation}
and
\begin{equation}\label{remainderformula}
R_N^{(\theta )} (\nu ,z) = \frac{1}{z^{2N - 1}}( - 1)^{N - 1} \frac{2}{\pi}\int_0^{ + \infty } \frac{s^{2N - 2} \Re \Theta _\nu  \left( s\e^{\frac{\pi }{2}\im}  \right)}{1 + (s/z)^2 }\d s .
\end{equation}
Formula \eqref{tcoeffint}, together with \eqref{retheta}, implies that
\[
( - 1)^n t_n (\nu ) \begin{cases} >0 & \text{ if } \; -\frac{1}{2} < \nu < \frac{1}{2}, \\ =0 & \text{ if } \; \nu =\pm\frac{1}{2}, \\ <0 & \text{ if } \; \frac{1}{2} < \pm\nu < \frac{3}{2}. \end{cases}
\]
The fact that the $t_n(\nu)$'s are polynomials in $\nu^2$ of degree $n$ is shown in Appendix \ref{Appendix}. Hence, it remains to prove the claimed bounds on the remainder term $R_N^{(\theta )} (\nu ,z)$. To this end, we combine the elementary inequality
\[
\left| 1 + u^2 \right| \geq \begin{cases} 1 & \text{ if } \; \left|\arg u\right| \leq \frac{\pi}{4}, \\ |\sin (2\arg u)| & \text{ if } \; \frac{\pi}{4} < \left|\arg u\right| < \frac{\pi}{2}, \end{cases}
\]
the fact that $\Re \Theta _\nu  (s\e^{\frac{\pi }{2}\im} )$ does not change sign for $s>0$ and fixed $\nu$ (Lemma \ref{lemma2}), and formula \eqref{tcoeffint} (with $N$ in place of $n$), and deduce
\begin{align*}
\left| R_N^{(\theta )} (\nu ,z) \right| & \le \frac{1}{\left| z \right|^{2N - 1} }\frac{2}{\pi }\int_0^{ + \infty } \frac{s^{2N - 2} \left| \Re \Theta _\nu  (s \e^{\frac{\pi }{2}\im} ) \right|}{\left| 1 + (s/z)^2  \right|}\d s  = \frac{1}{\left| z \right|^{2N - 1} }\left| \frac{2}{\pi }\int_0^{ + \infty } \frac{s^{2N - 2} \Re \Theta _\nu  (s\e^{\frac{\pi }{2}\im} )}{\left| 1 + (s/z)^2 \right|}\d s  \right|
\\ & \le \frac{1}{\left| z \right|^{2N - 1} }\left| \frac{2}{\pi }\int_0^{ + \infty } s^{2N - 2} \Re \Theta _\nu  (s\e^{\frac{\pi }{2}\im} )\d s \right| \times \begin{cases} 1 & \text{ if } \; \left|\arg z\right| \leq \frac{\pi}{4}, \\ |\csc (2\arg z)| & \text{ if } \; \frac{\pi}{4} < \left|\arg z\right| < \frac{\pi}{2} \end{cases} \\ & = \frac{\left| t_N (\nu ) \right|}{\left| z \right|^{2N - 1} } \times \begin{cases} 1 & \text{ if } \; \left|\arg z\right| \leq \frac{\pi}{4}, \\ |\csc (2\arg z)| & \text{ if } \; \frac{\pi}{4} < \left|\arg z\right| < \frac{\pi}{2} . \end{cases}
\end{align*}
It follows readily from \eqref{retheta}, \eqref{tcoeffint} and \eqref{remainderformula} that, when $z>0$, $R_N^{(\theta )} (\nu ,z)$ and $\frac{t_N (\nu )}{z^{2N - 1} }$ have the same sign.

The cases when $\nu =\pm \frac{3}{2}$ may be proved using \eqref{32case} and the integral representation
\[
\arctan \left( \frac{1}{z} \right) = \frac{1}{z}\int_0^1 \frac{\d s}{1 + (s/z)^2 }, \quad \Re z>0.
\]
We leave the details to the interested reader.
\end{proof}

\section{Proof of Theorem \ref{thmX}}\label{Section4}

To prove Theorem \ref{thmX}, we require some lemmas.

\begin{lemma} Suppose that $-\frac{1}{2}<\nu <\frac{1}{2}$. Then the function $w=\Theta_\nu(z)$, $\Re z>0$, is injective and its range contains the closed right half-plane $\Re w \geq 0$.
\end{lemma}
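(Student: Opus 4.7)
The plan is to apply the argument principle on a large half-disc, using the boundary and asymptotic information from Lemmata \ref{lemma1} and \ref{lemma2}. For any $w_0$ with $\Re w_0\geq 0$, I consider the zeros of $\Theta_\nu(z)-w_0$ on $D_R=\{|z|<R,\,\Re z>0\}$ for large $R$. The hypotheses ensure non-vanishing on the boundary of $D_R$: on the imaginary segment $\Re(\Theta_\nu-w_0)<0$ by \eqref{retheta}, and on the large semicircle $\Theta_\nu(z)-w_0\sim z$ by \eqref{thetaasymp}. A direct winding-number count around $0$ gives exactly one---the semicircle contributes a counterclockwise half-turn through the right half-plane, and the imaginary segment contributes another counterclockwise half-turn through $\{\Re w<0\}$ from $\approx \im R$ down to $\approx-\im R$. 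Thus each such $w_0$ has exactly one preimage in $D=\{\Re z>0\}$, giving the range-containment assertion.

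For injectivity on all of $D$, I would promote this count globally by a degree argument. Using the representation \eqref{thetaonim} and the well-known strict monotonicity of $s\mapsto I_\nu(s)/K_\nu(s)$ on $(0,\infty)$, both $\Re\Theta_\nu(\im s)$ and $\Im\Theta_\nu(\im s)$ are strictly monotone for $s>0$; combined with Schwarz reflection for $s<0$ and continuity at the origin, this shows $\Theta_\nu|_{\partial D}$ is injective with image a simple arc $\gamma\subset\{\Re w<0\}$ running from $-\im\infty$ through $\Theta_\nu(0)$ to $+\im\infty$. The Jordan curve $\gamma\cup\{\infty\}$ on the Riemann sphere separates it into two open regions; let $\Omega_+$ denote the one containing $\{\Re w>0\}$. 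Since $\Theta_\nu(D)$ is connected, avoids $\gamma$, and meets $\Omega_+$, we have $\Theta_\nu(D)\subseteq\Omega_+$. The asymptotic $\Theta_\nu(z)\sim z$ makes $\Theta_\nu:D\to\Omega_+$ proper, so its mapping degree is well-defined, locally constant on $\Omega_+$, and equal to $1$ on $\{\Re w>0\}\subseteq\Omega_+$---hence equal to $1$ throughout. A proper holomorphic map of degree one is a biholomorphism, yielding injectivity on all of $D$.

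The main obstacle I anticipate is verifying the injectivity of $\Theta_\nu|_{\partial D}$ and the Jordan-arc structure of its image. Monotonicity of $\Im\Theta_\nu(\im s)$ for $s>0$ follows directly from monotonicity of $|\xi(s)|$ in the representation \eqref{thetaonim}, but one must analyse $s=0$ carefully, using the small-argument asymptotics of $I_\nu$ and $K_\nu$ (which behave differently for $\nu$ positive versus negative) to confirm $|\xi(0^+)|=1$ and to match the real-axis and imaginary-axis limits of $\Theta_\nu$ at the origin. This ensures $\gamma$ meets the real axis only at $\Theta_\nu(0)\in(-\tfrac{\pi}{4},0)$ and does not fold back through its own interior, so that the Jordan-curve separation argument used for the degree computation is legitimate.
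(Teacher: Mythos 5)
Your overall strategy is sound and your first step is a nice, self-contained way to get the range statement: the winding-number count of $\Theta_\nu(\partial D_R)-w_0$ (half-turn $\approx\pi$ from the large semicircle via \eqref{thetaasymp}, plus half-turn $\approx\pi$ from the boundary arc confined to $\{\Re w<0\}$ via \eqref{retheta}) correctly yields exactly one preimage for each $w_0$ with $\Re w_0\ge 0$. Your ingredients are the same as the paper's (the representation \eqref{thetaonim}, monotonicity of $I_\nu/K_\nu$, the asymptotics \eqref{thetaasymp}, and injectivity of $\Theta_\nu$ on the imaginary axis), but the execution differs: the paper transfers the problem to a bounded Jordan domain via the M\"obius map $\zeta=1/(1+z)$ and then invokes a ready-made boundary correspondence principle (Markushevich, Thm.~4.9) that tolerates the simple pole of $\Theta_\nu^\ast$ at $\zeta=0$, whereas you work directly on the unbounded half-plane with an explicit winding computation followed by a degree-theoretic globalisation. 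Your route avoids the conformal transfer and the need for analyticity of $\Theta_\nu$ across the imaginary axis (which the paper secures via the non-vanishing of the Hankel functions near $\arg z=\pm\tfrac{\pi}{2}$), at the price of having to reprove the correspondence principle by hand on a domain whose boundary passes through $\infty$.

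The one genuine gap is the clause ``$\Theta_\nu(D)$ \ldots avoids $\gamma$,'' which you list as a given alongside connectedness but which is precisely the nontrivial content of the boundary correspondence principle: nothing in connectedness prevents an interior point from being mapped onto the image of the boundary, and your subsequent properness and degree arguments presuppose $\Theta_\nu(D)\subseteq\Omega_+$, so the reasoning as written is circular at this point. It can be repaired within your framework, e.g.\ as follows: for $w_0\notin\gamma$ the preimage set of a small closed disc about $w_0$ is compact in $D$ (it is bounded because $\Theta_\nu(z)\to\infty$ as $z\to\infty$ in $\overline{D}$, and bounded away from $i\mathbb{R}$ because $\Theta_\nu$ is continuous up to the boundary with boundary values in $\gamma$), so the multiplicity-counted preimage number is locally constant on $\mathbb{C}\setminus\gamma$, hence constant on each of $\Omega_\pm$; it equals $1$ on $\Omega_+$ by your first step, and equals $0$ on $\Omega_-$ because the entire curve $\Theta_\nu(\partial D_R)$ lies in $\{\Re w\ge(\tfrac{1}{2}|\nu|-\tfrac{1}{4})\pi\}$ for large $R$, so points with sufficiently negative real part (which lie in $\Omega_-$) have winding number zero. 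The open mapping theorem then excludes interior points mapping onto $\gamma$, since a neighbourhood of such an image point would meet $\Omega_-$. With this inserted, your degree argument closes and the lemma follows; alternatively, you could simply cite the boundary correspondence principle as the paper does.
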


\begin{proof} Since $\Theta_\nu(z)$ is an even function of $\nu$, it suffices to consider the case $0 \leq \nu < \frac{1}{2}$. As $\sin(\pi \nu)>0$ for $0 \leq \nu <\frac{1}{2}$, we find from \eqref{thetaonim} that
\[
\Re \Theta _\nu  \left(s\e^{\frac{\pi }{2}\im} \right) =  - \frac{1}{2}\arctan \left( \frac{\cos (\pi \nu )}{\sin (\pi \nu ) + \pi \frac{I_\nu  (s)}{K_\nu  (s)}} \right) <0
\]
and
\[
\Im \Theta _\nu  \left(s\e^{\frac{\pi }{2}\im} \right) = \frac{1}{2}\log \left( 1 + 2\pi \sin (\pi \nu )\frac{I_\nu  (s)}{K_\nu  (s)} + \pi ^2 \frac{I_\nu ^2 (s)}{K_\nu ^2 (s)} \right)>0
\]
for all $s>0$. Now $\frac{I_\nu  (s)}{K_\nu  (s)}$, as a function of $s>0$, is positive and strictly monotonically increasing (cf. \cite[\href{http://dlmf.nist.gov/10.37}{\S10.37}]{DLMF}). By the Schwarz reflection principle,
\[
\Theta _\nu  \left(s\e^{ - \frac{\pi }{2}\im} \right) = \Re \Theta _\nu  \left(s\e^{\frac{\pi }{2}\im} \right) - \im\Im \Theta _\nu  \left(s\e^{\frac{\pi }{2}\im} \right)
\]
for any $s>0$. Therefore, the function $\Theta_\nu(z)$ is injective on the imaginary axis, which it maps into an infinite curve $\mathcal{G}_\nu$ that lies entirely in the left half-plane and is symmetric with respect to the real axis. The curve $\mathcal{G}_\nu$ crosses the negative real axis at the point
\[
\mathop {\lim }\limits_{s \to 0 + } \Theta _\nu  \left( s\e^{ \pm \frac{\pi }{2}\im} \right) = \mathop {\lim }\limits_{s \to 0 + } \theta _\nu  \left( s \e^{ \pm \frac{\pi }{2}\im}  \right) + \left( \tfrac{1}{2}\nu  + \tfrac{1}{4} \right)\pi  = \left( \tfrac{1}{2}\nu  - \tfrac{1}{4} \right)\pi.
\]

To proceed further, we require that $\Theta_\nu(z)$ is not just continuous, but also analytic on the imaginary axis with the exception of the origin. For this, we note that when $0 \leq \nu < \frac{1}{2}$, the Hankel functions $H_\nu ^{(1)} (z)$ and $H_\nu ^{(2)} (z)$ do not vanish in a vicinity of the rays $\arg z=\pm\frac{\pi}{2}$ \cite{Cruz1982}, i.e., $K_\nu (z)$ is non-zero near $\arg z =\pm \pi$. Therefore, \eqref{Thetaext} defines a holomorphic function of $z$ in a neighbourhood of the rays $\arg z =\pm\frac{\pi}{2}$. The mapping
\[
\zeta  = \zeta (z) = \frac{1}{1 + z}
\]
constitutes a bijection between the imaginary axis and the circle
\[
\mathcal{C}=\left\{ \zeta :\left| \zeta  - \tfrac{1}{2} \right| = \tfrac{1}{2} \right\},
\]
and between the open right half-plane and the interior $I(\mathcal{C})$ of $\mathcal{C}$. Define the function
\[
\Theta _\nu^\ast (\zeta ) = \Theta_\nu \big( \tfrac{1}{\zeta } - 1  \big)
\]
acting on the closure of $I(\mathcal{C})$. Observe that $\Theta _\nu^\ast (\zeta )$ is analytic on the closure of $I(\mathcal{C})$ except at the points $\zeta =0$ and $1$. It remains continuous at $\zeta = 1$, and at $\zeta =0$ it is of infinitely large order $1$:
\[
\mathop {\lim }\limits_{\zeta  \to 0} \zeta \Theta_\nu ^\ast  (\zeta ) = 1
\]
(this follows from the asymptotics \eqref{thetaasymp} of $\Theta _\nu(z)$). Since $\Theta_\nu(z)$ is one-to-one on the imaginary axis, $\Theta_\nu^\ast (\zeta )$ is one-to-one on $\mathcal{C}$. Thus, by a general version of the boundary correspondence principle \cite[Ch. 4, \S18, Theorem 4.9]{Markushevich1965}, $\Theta _\nu^\ast (\zeta)$ is injective on the interior of $\mathcal{C}$. The image of the interior is one of the domains with boundary $\mathcal{G}_\nu$, i.e., the domain on the left of an observer moving with the point $w=\Theta_\nu^\ast (\zeta )$ as $\zeta$ traverses $\mathcal{C}$ once in the positive direction. By the properties of $\mathcal{G}_\nu$, this domain includes the closed half-plane $\Re w \geq 0$ (see Figure \ref{fig2}).
\end{proof}

\begin{figure}[!ht]
	\centering
		\includegraphics[width=0.4\textwidth]{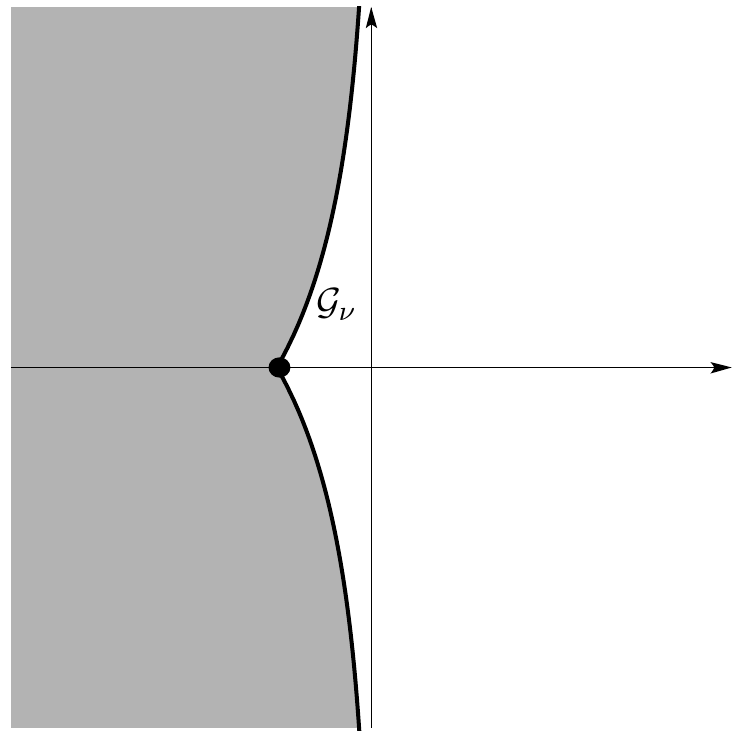}
		\caption{The range of the function $w=\Theta_\nu (z)$ (unshaded) bounded by the curve $\mathcal{G}_\nu$ which intersects the real axis at $w=\left( \tfrac{1}{2}|\nu|  - \tfrac{1}{4} \right)\pi<0$.}
		\label{fig2}
\end{figure}

Since $\Theta_\nu (z)$ is holomorphic and injective on the domain $\Re z>0$, it has a holomorphic inverse defined on the range of $\Theta_\nu (z)$. Note that, due to the way we defined $\Theta_\nu (z)$ and $X_\nu(w)$, this inverse function must coincide with $X_\nu(w)$ when $w$ is real and $w > \left( \frac{1}{2}|\nu|  - \frac{1}{4} \right)\pi$. Thus, $z = X_\nu(w)$ extends analytically to a domain which includes the closed half-plane $\Re w \geq 0$ and its range is $\Re z>0$.

\begin{lemma}\label{lemma6} Suppose that $-\frac{1}{2} < \nu < \frac{1}{2}$. Then for any $s>0$,
\begin{equation}\label{Xpositive}
\Re X_\nu ( \im s ) >0.
\end{equation}
Furthermore,
\begin{equation}\label{Xleadingasymptotic}
X_\nu  (w) = w+\O\left(\frac{1}{|w|}\right)
\end{equation}
as $|w| \to +\infty$ in the closed sector $|\arg w|\leq \frac{\pi}{2}$, and
\begin{equation}\label{ReXasymp}
\Re X_\nu ( \im s ) = o(s^{-r})
\end{equation}
as $s\to +\infty$, with any $r>0$.
\end{lemma}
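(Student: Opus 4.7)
The first claim \eqref{Xpositive} requires essentially no new argument: the preceding lemma established that $\Theta_\nu$ is a holomorphic bijection from $\{\Re z>0\}$ onto a domain containing $\{\Re w\geq 0\}$, so its inverse $X_\nu$ sends $\{\Re w\geq 0\}$ into the open right half-plane. In particular $\Re X_\nu(\im s)>0$ for every $s>0$. For the leading asymptotic \eqref{Xleadingasymptotic} the plan is to invert \eqref{thetaasymp} directly. Setting $z=X_\nu(w)$ one has $w=\Theta_\nu(z)$, and a short compactness argument, based on the continuity of $\Theta_\nu$ on the closed half-plane $\Re z\geq 0$ furnished by Lemma \ref{lemma1}, shows that $|w|\to +\infty$ forces $|z|\to +\infty$; substituting into \eqref{thetaasymp} then gives $w=z+\O(1/|z|)$ and hence $X_\nu(w)=w+\O(1/|w|)$.

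The core of the lemma is \eqref{ReXasymp}. My plan is to expand $\Theta_\nu$ horizontally about a point $\im y$ on the imaginary axis, exploiting the exponential smallness of $\Re\Theta_\nu$ there supplied by \eqref{rethetaasymp}. Write $X_\nu(\im s)=x+\im y$ with $x>0$ and $y\in\mathbb{R}$, so that $\Theta_\nu(x+\im y)=\im s$. From the fundamental theorem of calculus along the horizontal segment,
\[
\Theta_\nu(x+\im y)=\Theta_\nu(\im y)+x\int_0^1\Theta_\nu'(tx+\im y)\,\d t,
\]
and taking real parts yields
\[
0=\Re\Theta_\nu(\im y)+x\int_0^1\Re\Theta_\nu'(tx+\im y)\,\d t.
\]
Given a uniform derivative bound $\Theta_\nu'(z)=1+\O(1/|z|^2)$ as $|z|\to+\infty$ in $|\arg z|\leq\tfrac{\pi}{2}$, together with $y=s+\O(1/s)$ and $x=\O(1/s)$ supplied by \eqref{Xleadingasymptotic}, the integral above equals $1+\O(1/y^2)$, so the identity rearranges to $x=-\Re\Theta_\nu(\im y)\bigl(1+\O(1/y^2)\bigr)$. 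Combining with \eqref{rethetaasymp} then gives $x=\tfrac{1}{2}\cos(\pi\nu)\,\e^{-2s}(1+o(1))$, which decays faster than any negative power of $s$, and in particular establishes \eqref{ReXasymp}.

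The one technical point that will need genuine care is the derivative estimate $\Theta_\nu'(z)=1+\O(1/|z|^2)$ uniform in the closed sector $|\arg z|\leq\tfrac{\pi}{2}$ as $|z|\to+\infty$. I expect to obtain it either from \eqref{Mtheta} combined with the classical Hankel asymptotic $M_\nu^2(z)\sim\tfrac{2}{\pi z}$, or else by differentiating the Cauchy-type integral representation \eqref{thetaexact} obtained in Section \ref{Section3} and bounding the resulting integrals by the same device used in the proof of Theorem \ref{thmtheta}. Once this uniform derivative bound is in hand, the perturbation argument around $\im y$ is effectively a one-line calculation; moreover the resulting exponential (rather than merely super-polynomial) decay of $\Re X_\nu(\im s)$ is exactly the kind of rapid decay that will power the subsequent contour-integral derivation of the asymptotic expansion \eqref{Xtruncexp} with its computable remainder.
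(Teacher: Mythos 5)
Your handling of \eqref{Xpositive} and \eqref{Xleadingasymptotic} is essentially the paper's, but for the key estimate \eqref{ReXasymp} you take a genuinely different route, and it works. The paper first upgrades the expansion \eqref{Thetaasymptoticseries} to the full closed sector $|\arg z|\leq\frac{\pi}{2}$ (citing Luke, or the $K_\nu$ expansion valid for $|\arg z|\leq\pi$), formally reverts it to get the complete expansion $X_\nu(w)\sim w+\sum_n c_n(\nu)w^{1-2n}$ on $|\arg w|\leq\frac{\pi}{2}$ with the coefficients identified as McMahon's by uniqueness of asymptotic power series, and then observes that at $w=\im s$ every term of that expansion is purely imaginary, so $\Re X_\nu(\im s)=\O(s^{1-2N})$ for every $N$. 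Your argument instead works locally on the imaginary axis: the identity $0=\Re\Theta_\nu(\im y)+x\int_0^1\Re\Theta_\nu'(tx+\im y)\,\d t$, together with the exponential smallness \eqref{rethetaasymp} of $\Re\Theta_\nu(\im y)$ and $y=s+\O(1/s)$, gives $x=-\Re\Theta_\nu(\im y)\bigl(1+\O(1/y^2)\bigr)$ directly. This buys you strictly more: the exponential asymptotics $\Re X_\nu(\im s)=\frac{\cos(\pi\nu)}{2}\e^{-2s}(1+o(1))$, which the paper only states in a remark as obtainable ``with more sophisticated tools such as \'Ecalle's alien derivatives''. The price is the uniform bound $\Theta_\nu'(z)=1+\O(1/|z|^2)$ on the closed sector, and here only the first of your two proposed routes is viable: differentiating the Cauchy-type representation \eqref{thetaexact} degenerates on the rays $\arg z=\pm\frac{\pi}{2}$ (the denominator $1+(s/z)^2$ vanishes there, and the resulting bounds carry a $|\csc(2\arg z)|$ factor that blows up), whereas continuing \eqref{Mtheta} analytically as $\Theta_\nu'(z)=\frac{2}{\pi z H_\nu^{(1)}(z)H_\nu^{(2)}(z)}$ and inserting the Hankel expansions, each valid in a sector strictly containing $|\arg z|\leq\frac{\pi}{2}$, does give the estimate uniformly up to and including the boundary rays. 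Finally, the fundamental theorem of calculus with left endpoint $\im y$ needs $\Theta_\nu$ analytic across the punctured imaginary axis; this is supplied for $|\nu|<\frac{1}{2}$ by the Cruz--Sesma non-vanishing result already invoked in the injectivity lemma, so no new ingredient is required there.
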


\begin{proof} Since the domain of the function $X_\nu (w)$ contains the imaginary axis and its range is the right half-plane, the inequality \eqref{Xpositive} follows readily.

To prove the asymptotic formulae, we begin by showing that
\begin{equation}\label{Xleadingasymp}
X_\nu (w) = w(1 + o(1))
\end{equation}
as $|w| \to +\infty$ in the sector $|\arg w|\leq \frac{\pi}{2}$. By the properties of the function $\Theta_\nu(z)$, for each $w$ in the sector $|\arg w|\leq \frac{\pi}{2}$, there is a unique $z$, $\Re z>0$ such that $w=\Theta_\nu(z)$. Thus, it is enough to prove that
\[
\frac{z}{\Theta _\nu  (z)} \to 1
\]
as $|z| \to +\infty$ in $|\arg z|\leq \frac{\pi}{2}$. But this is a simple consequence of \eqref{thetaasymp}. Now, by \eqref{Thetadef} and Theorem \ref{thmtheta},
\begin{equation}\label{Thetaasymptoticseries}
\Theta _\nu  (z) \sim z + \sum\limits_{n = 1}^\infty \frac{t_n (\nu )}{z^{2n-1}}
\end{equation}
as $|z| \to +\infty$ in any closed sub-sector of $|\arg z|\leq \frac{\pi}{2}$. To proceed further, we require that this asymptotic expansion holds in $|\arg z|\leq \frac{\pi}{2}$ itself. This can be shown, for example, by substituting the large-$z$ asymptotic series \cite[\href{http://dlmf.nist.gov/10.40.E2}{Eq. 10.40.2}]{DLMF}, which is valid when $|\arg z|\leq \pi$, of the function $K_\nu(z)$ into \eqref{Thetaext} and expanding the right-hand side in inverse powers of $z$. Alternatively, we can refer directly to a result of Luke \cite[Ch. V, \S5.11.4, Eqs. (9) and (11)]{Luke1969}. Let $z$, $\Re z>0$ be the unique point that corresponds to $w$, $|\arg w|\leq \frac{\pi}{2}$ via the relation $w=\Theta_\nu(z)$. From \eqref{Xleadingasymp}, we can assert that
\[
z = X_\nu  (w) = w(1 + o(1))
\]
as $|w| \to +\infty$ in the sector $|\arg w|\leq \frac{\pi}{2}$. Beginning with this approximation and repeatedly re-substituting in the right-hand side of 
\[
X_\nu  (w) = w - \frac{t_1 (\nu )}{z} - \frac{t_2 (\nu )}{z^3} -  \cdots  - \frac{t_{N - 1} (\nu )}{z^{2N - 3}} + \O\left( \frac{1}{|z|^{2N - 1} } \right),
\]
$N \geq 2$ being an arbitrary integer, we see that there exists a representation of the form 
\begin{equation}\label{Xexp}
X_\nu  (w) = w - \frac{d_1 (\nu )}{w} - \frac{d_2 (\nu )}{w^2 } -  \cdots  - \frac{d_{N - 1} (\nu )}{w^{N - 1}} + \O\left( \frac{1}{|w|^N } \right)
\end{equation}
as $|w| \to +\infty$ in $|\arg w|\leq \frac{\pi}{2}$, where the coefficients $d_n(\nu )$ are polynomials in the $t_n(\nu )$'s which are independent of $N$ and $w$. For large positive $w$, \eqref{Xexp} must coincide with McMahon's expansion, whence, the uniqueness theorem of asymptotic power series \cite[Ch. 1, \S7.2]{Olver1997} tells us that
\[
d_{2n - 1} (\nu ) = -c_n (\nu )\quad \text{and} \quad d_{2n} (\nu ) = 0
\]
for all positive integer $n$. Accordingly,
\begin{equation}\label{Xasymptoticseries}
X_\nu  (w) \sim w + \sum\limits_{n = 1}^\infty \frac{c_n (\nu )}{w^{2n - 1}}
\end{equation}
as $|w| \to +\infty$ in $|\arg w|\leq \frac{\pi}{2}$. This asymptotic expansion immediately implies the weaker result \eqref{Xleadingasymptotic}. It also follows that for any $N\geq 1$,
\[
\Re X_\nu  (\im s) = \Re \left( X_\nu  (\im s) - \im s - \sum\limits_{n = 1}^{N - 1} \frac{c_n (\nu )}{(\im s)^{2n - 1} } \right) = \Re \O\left( \frac{1}{s^{2N - 1} } \right) = \O\left( \frac{1}{s^{2N - 1} } \right)
\]
as $s\to +\infty$, confirming the claim \eqref{ReXasymp}.
\end{proof}

\paragraph{\textbf{Remark.}} With the aid of more sophisticated tools, such as \'Ecalle's alien derivatives, it is possible to obtain more precise asymptotics like
\[
\Re X_\nu ( \im s ) = \frac{\cos(\pi \nu)}{2}\e^{-2s}(1+o(1)) 
\]
as $s\to +\infty$, an analogue of \eqref{rethetaasymp} (In\^es Aniceto, personal communication, May 2020). Nevertheless, the weaker result \eqref{ReXasymp} is adequate for our purposes.

\begin{proof}[Proof of Theorem \ref{thmX}] The proof is completely analogous to the proof of Theorem \ref{thmtheta}, therefore we present only the resulting integral formulae which could be of independent interest. Following the initial steps in the proof of Theorem \ref{thmtheta} and referring to Lemma \ref{lemma6}, we derive
\[
X_\nu  (w) = w + \frac{1}{w}\frac{2}{\pi}\int_0^{ + \infty } \frac{\Re X_\nu  (\im s)}{1 + (s/w)^2 }\d s 
\]
for $-\frac{1}{2}<\nu<\frac{1}{2}$ and $\Re w>0$. By expanding the integrand via \eqref{geometric} (with $w$ in place of $z$), we obtain the truncated expansion \eqref{Xtruncexp} with
\begin{equation}\label{cnformula}
c_n (\nu ) = ( - 1)^{n - 1} \frac{2}{\pi}\int_0^{ + \infty } s^{2n - 2} \Re X_\nu  (\im s)\d s 
\end{equation}
and
\begin{equation}\label{RXrep}
R_N^{(X)} (\nu ,w) = \frac{1}{w^{2N - 1}}( - 1)^{N - 1} \frac{2}{\pi}\int_0^{ + \infty } \frac{s^{2N - 2} \Re X_\nu  (\im s)}{1 + (s/w)^2 }\d s .
\end{equation}
From \eqref{Xpositive} and \eqref{cnformula}, we can assert that $(-1)^n c_n (\nu )<0$. It is proved in Appendix \ref{Appendix} that the $c_n(\nu)$'s are polynomials in $\nu^2$ of degree $n$. To estimate the remainder $R_N^{(X)} (\nu ,w)$, we can proceed as in the case of $R_N^{(\theta)} (\nu ,z)$.
\end{proof}

\section{Proof of Theorem \ref{thmT}}\label{Section5}

We begin by observing that $T(w)$, defined via \eqref{Tdef}, extends analytically to the closed half-plane $\Re w \geq 0$. Indeed, $z=X_\nu(w)$ extends analytically to a domain which includes the closed half-plane $\Re w \geq 0$ and its range is $\Re z>0$. In the following lemma, we establish some additional properties of $T(w)$ that will be used to prove Theorem \ref{thmT}.

\begin{lemma}\label{lemma7} For any $s>0$,
\begin{equation}\label{imTineq}
\Im \left(  \e^{-\frac{\pi}{3}\im} T ( \im s ) \right) < 0.
\end{equation}
Furthermore,
\begin{equation}\label{Tgrowth}
w^{ - 2/3} T(w) = 1 + \O\left( \frac{1}{\left| w \right|^2 } \right)
\end{equation}
as $|w| \to +\infty$ in the closed sector $|\arg w|\leq \frac{\pi}{2}$, and
\begin{equation}\label{imTasymp}
\Im \left(  \e^{-\frac{\pi}{3}\im} T ( \im s ) \right) = o(s^{ - r} )
\end{equation}
as $s\to +\infty$, with any $r>0$.
\end{lemma}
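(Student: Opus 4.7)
The plan is to deduce all three statements from properties of $X_{1/3}$ established in Lemma \ref{lemma6}, together with a careful tracking of the principal $\tfrac{2}{3}$-power in the definition \eqref{Tdef} of $T(w)$.

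For the sign inequality \eqref{imTineq}, I would note that by Lemma \ref{lemma6} with $\nu = 1/3$, the point $\xi := \tfrac{3}{2} X_{1/3}\bigl(\tfrac{2}{3}\im s\bigr)$ lies in the open right half-plane, so $\arg\xi \in (-\tfrac{\pi}{2},\tfrac{\pi}{2})$. The principal $\tfrac{2}{3}$-power then satisfies $\arg T(\im s) = \tfrac{2}{3}\arg\xi \in (-\tfrac{\pi}{3},\tfrac{\pi}{3})$, so $\arg\bigl(\e^{-\pi\im/3}T(\im s)\bigr)$ lies in the open interval $(-\tfrac{2\pi}{3},0)$. The sine function is strictly negative there, which yields $\Im\bigl(\e^{-\pi\im/3}T(\im s)\bigr) < 0$.

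For the growth estimate \eqref{Tgrowth}, the key observation is that the asymptotic expansion \eqref{Xasymptoticseries} of $X_{1/3}$ holds uniformly in the closed sector $|\arg w| \leq \tfrac{\pi}{2}$ and contains only odd negative powers of $w$. Writing $Y(w) := \tfrac{3}{2} X_{1/3}(\tfrac{2}{3}w)$ and rescaling, one obtains
\[
Y(w) = w\biggl(1 + \sum_{n=1}^{N-1} \frac{c_n(1/3)(3/2)^{2n}}{w^{2n}} + \O\bigl(|w|^{-2N}\bigr)\biggr)
\]
as $|w| \to +\infty$ in $|\arg w| \leq \tfrac{\pi}{2}$, an expansion in even negative powers of $w$. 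For large $|w|$ in that sector the quotient $Y(w)/w$ is close to $1$, so its principal argument is small, which legitimises the identity $T(w)/w^{2/3} = (Y(w)/w)^{2/3}$ with all powers taken as principal branches. Substituting into the binomial series $(1+\epsilon)^{2/3} = 1 + \tfrac{2}{3}\epsilon + \cdots$ and collecting terms yields $w^{-2/3}T(w) = 1 + \O(|w|^{-2})$, which is \eqref{Tgrowth}. Continuing the same formal manipulation to all orders gives the full asymptotic expansion $w^{-2/3}T(w) \sim 1 + \sum_{n \geq 1} T_n/w^{2n}$ valid in $|\arg w| \leq \tfrac{\pi}{2}$, with real (indeed rational) coefficients $T_n$ that are polynomials in the $c_n(1/3)$.

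Finally, for \eqref{imTasymp} I would set $w = \im s$ in this full expansion. Since $(\im s)^{-2n} = (-1)^n s^{-2n}$ is real and $(\im s)^{2/3} = \e^{\pi\im/3} s^{2/3}$, the $N$-term truncation takes the form
\[
\e^{-\pi\im/3}T(\im s) = s^{2/3}\biggl(1 + \sum_{n=1}^{N-1}(-1)^n T_n s^{-2n} + \O\bigl(s^{-2N}\bigr)\biggr),
\]
in which the main terms are real. Taking imaginary parts absorbs everything into the $\O(s^{2/3-2N})$ remainder, and since $N$ is arbitrary, $\Im\bigl(\e^{-\pi\im/3}T(\im s)\bigr) = o(s^{-r})$ for every $r > 0$. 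The only delicate step, and the main obstacle in the argument, is the branch identity $T(w)/w^{2/3} = (Y(w)/w)^{2/3}$; it is secured by Lemma \ref{lemma6}, which keeps $Y(w)$ in the right half-plane (so its principal $\tfrac{2}{3}$-power is unambiguously defined) and forces $Y(w)/w$ to stay in a neighbourhood of $1$ for large $|w|$ with $|\arg w| \leq \tfrac{\pi}{2}$, so that principal arguments compose additively.
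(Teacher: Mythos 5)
Your proof is correct. The arguments for \eqref{imTineq} and \eqref{Tgrowth} are essentially the paper's own: the sign inequality comes from the same argument-tracking through the principal $\tfrac{2}{3}$-power using the fact that the range of $X_{1/3}$ is the right half-plane, and the growth estimate is the same rescaling computation (the paper gets by with only the leading-order estimate \eqref{Xleadingasymptotic}, since the $\O(|w|^{-1})$ correction to $X_{1/3}(\tfrac{2}{3}w)$ already becomes $\O(|w|^{-2})$ after division by $w$; your invocation of the full series \eqref{Xasymptoticseries} is heavier than needed here but harmless). Where you genuinely diverge is \eqref{imTasymp}. The paper stays with the modulus--argument decomposition: it computes $\arg X_{1/3}(\tfrac{2}{3}\im s)=\tfrac{\pi}{2}+o(s^{-r-1})$ directly from \eqref{ReXasymp} and \eqref{Xleadingasymptotic}, concludes $\arg\bigl(\e^{-\pi\im/3}T(\im s)\bigr)=o(s^{-r-1})$, and multiplies by $\bigl|\e^{-\pi\im/3}T(\im s)\bigr|=s^{2/3}+o(1)$. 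You instead push the full expansion of $X_{1/3}$ through the fractional power to get a uniform expansion of $w^{-2/3}T(w)$ in the closed sector with real coefficients, evaluate at $w=\im s$, and observe that all displayed terms are real so the imaginary part lives entirely in the arbitrarily small remainder --- exactly mirroring how the paper itself proves \eqref{ReXasymp} inside Lemma \ref{lemma6}. Both routes are sound; the paper's avoids having to justify the term-by-term reversion of the expansion under the $\tfrac{2}{3}$-power at this stage (it defers that bookkeeping to the Appendix), while yours is more uniform in style across the two lemmas at the cost of needing the composition-of-asymptotic-series step, and the branch identity $T(w)/w^{2/3}=(Y(w)/w)^{2/3}$, which you correctly identify and justify as the delicate point.
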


\begin{proof} If $s>0$, then, by \eqref{Tdef},
\begin{equation}\label{argT}
\arg \left(  \e^{-\frac{\pi}{3}\im} T ( \im s ) \right) = \arg \left( T ( \im s ) \right) - \tfrac{\pi}{3} = \tfrac{2}{3}\arg \left( X_{1/3} \left( \tfrac{2}{3}\im s \right) \right) - \tfrac{\pi }{3}.
\end{equation}
Since the domain of the function $X_{1/3}(w)$ contains the imaginary axis and its range is the right half-plane, we can assert that
\[
 - \tfrac{2\pi}{3} < \tfrac{2}{3}\arg \left( X_{1/3} \left( \tfrac{2}{3}\im s \right) \right) - \tfrac{\pi}{3} < 0,
\]
which implies the inequality \eqref{imTineq}.

The asymptotic formula \eqref{Tgrowth} is a simple consequence of the definition \eqref{Tdef} and the estimate \eqref{Xleadingasymptotic}. Indeed,
\begin{align*}
w^{ - 2/3} T(w) & = \left( w^{ - 1} \tfrac{3}{2}X_{1/3} \left( \tfrac{2}{3}w \right) \right)^{2/3}  = \left( w^{ - 1} \frac{3}{2}\left( \frac{2}{3}w + \O\left( \frac{1}{\left| w \right|} \right) \right) \right)^{2/3} \\ & = \left( 1 + \O\left( \frac{1}{\left| w \right|^2 } \right) \right)^{2/3}  = 1 + \O\left( \frac{1}{\left| w \right|^2 } \right)
\end{align*}
as $|w| \to +\infty$ in the closed sector $|\arg w|\leq \frac{\pi}{2}$.

Finally, to establish \eqref{imTasymp}, we may proceed as follows. From Lemma \ref{lemma6}, one can infer that
\begin{align*}
\arg \left( X_{1/3} \left( \tfrac{2}{3}\im s \right) \right) & = \arctan \left( \frac{\Im X_{1/3} \left( \frac{2}{3}\im s \right)}{\Re X_{1/3} \left( \frac{2}{3}\im s \right)} \right) = \frac{\pi }{2} - \arctan \left( \frac{\Re X_{1/3} \left( \frac{2}{3}\im s \right)}{\Im X_{1/3} \left( \frac{2}{3}\im s \right)} \right)  \\ & = \frac{\pi}{2} - \arctan \left( \frac{o(s^{ - r} )}{\frac{2}{3}s + \O\left( \frac{1}{s} \right)} \right) = \frac{\pi}{2} + o(s^{ - r-1} )
\end{align*}
as $s\to +\infty$, with any $r>0$. Therefore, by \eqref{argT},
\[
\arg \left(  \e^{-\frac{\pi}{3}\im} T ( \im s ) \right) = o(s^{-r-1}).
\]
Additionally, from \eqref{Tgrowth},
\[
\left| \e^{-\frac{\pi}{3}\im} T ( \im s ) \right| = s^{2/3} + o(1).
\]
Taking these two estimates into account, we conclude that
\begin{align*}
\Im \left( \e^{-\frac{\pi}{3}\im} T ( \im s ) \right) & = \left|\e^{-\frac{\pi}{3}\im} T ( \im s )\right|\sin \left( \arg \left( \e^{-\frac{\pi}{3}\im} T ( \im s ) \right) \right) \\ & = \left(s^{2/3}+o(1)\right)\sin (o(s^{ - r-1} )) = o(s^{ - r} )
\end{align*}
as $s\to +\infty$, with any $r>0$.
\end{proof}

\begin{figure}[!ht]
	\centering
		\includegraphics[width=0.26\textwidth]{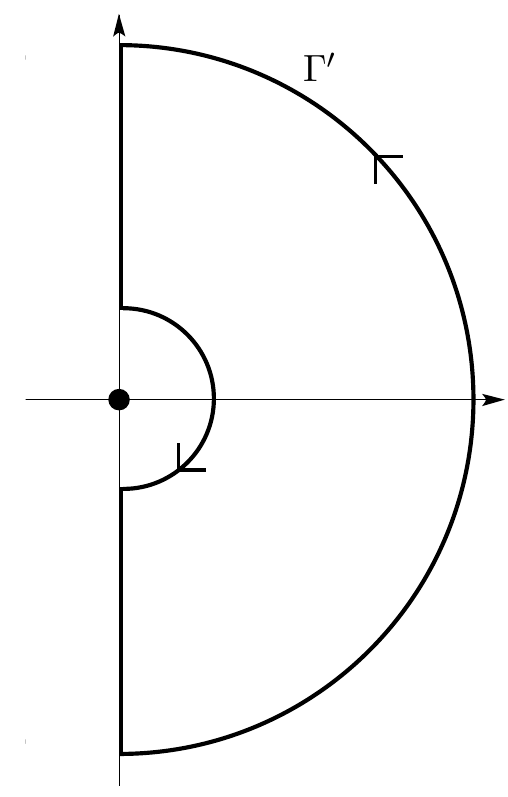}
		\caption{The contour of integration $\Gamma'$.}
		\label{fig3}
\end{figure}

\begin{proof}[Proof of Theorem \ref{thmT}] Let $\Gamma'$ be the positively oriented closed curve shown in Figure \ref{fig3}. By Cauchy's integral theorem,
\[
w^{ - 2/3} T(w) - 1 = \frac{1}{2\pi \im}\oint_{\Gamma '} \frac{t^{ - 2/3} T(t) - 1}{t - w}\d t  + \underbrace {\frac{1}{2\pi \im}\oint_{\Gamma '} \frac{t^{ - 2/3} T(t) - 1}{t + w}\d t }_0
\]
for all $w > 0$ enclosed by the contour $\Gamma'$. From \eqref{Tgrowth} it is seen that when the radius of the large semicircular
portion of the contour $\Gamma'$ approaches $+\infty$, the integrals along it tend to zero. Similarly, since $T(t)$ is bounded at the origin, when the radius of the small semicircular arc tends to $0$, the integrals along it tend to zero. Hence,
\begin{align*}
w^{ - 2/3} T(w) - 1 =\; & \frac{1}{2\pi \im}\int_{ + \im\infty }^0 \frac{t^{ - 2/3} T(t) - 1}{t - w}\d t  + \frac{1}{2\pi \im}\int_0^{ - \im\infty } \frac{t^{ - 2/3} T(t) - 1}{t - w}\d t
\\ & + \frac{1}{2\pi \im}\int_{ + \im\infty }^0 \frac{t^{ - 2/3} T(t) - 1}{t + w}\d t  + \frac{1}{2\pi \im}\int_0^{ - \im\infty } \frac{t^{ - 2/3} T(t) - 1}{t + w}\d t
\\ =\; &  - \frac{1}{2\pi }\int_0^{ + \infty } \frac{s^{ - 2/3} \e^{-\frac{\pi}{3}\im} T(\im s) - 1}{\im s - w}\d s  + \frac{1}{2\pi}\int_0^{ + \infty } \frac{s^{ - 2/3} \e^{\frac{\pi}{3}\im} T( - \im s) - 1}{\im s + w}\d s
\\ & - \frac{1}{2\pi }\int_0^{ + \infty } \frac{s^{ - 2/3} \e^{-\frac{\pi}{3}\im} T(\im s) - 1}{\im s + w}\d s + \frac{1}{2\pi }\int_0^{ + \infty } \frac{s^{ - 2/3} \e^{\frac{\pi}{3}\im} T( - \im s) - 1}{\im s - w}\d s
\\ =\; &  - \frac{1}{w^2}\frac{2}{\pi }\int_0^{ + \infty } \frac{s^{1/3} \Im\left(\e^{-\frac{\pi}{3}\im} T(\im s) \right)}{1 + (s/w)^2 }\d s,
\end{align*}
where in arriving at the last line, we made use of the identity
\[
2\im \Im \left( \e^{-\frac{\pi}{3}\im} T(\im s) \right) = \e^{-\frac{\pi}{3}\im} T(\im s) -\e^{\frac{\pi}{3}\im} T(-\im s)
\]
for all $s > 0$. This identity is a consequence of the Schwarz reflection principle. We can now remove the restriction on $w$ using analytic continuation, and therefore,
\begin{equation}\label{Texact}
T(w) = w^{2/3}\left(1 - \frac{1}{w^2}\frac{2}{\pi }\int_0^{ + \infty } \frac{s^{1/3} \Im\left(\e^{-\frac{\pi}{3}\im} T(\im s) \right)}{1 + (s/w)^2 }\d s\right)
\end{equation}
provided $\Re w>0$. Note that, by \eqref{imTasymp}, $\Im\left(\e^{-\frac{\pi}{3}\im} T(\im s) \right)$ decays faster than any negative power of $s$ as $s\to +\infty$. Thus, for any positive integer $N$, $\Re w>0$ and $s>0$, we can expand the integrand in \eqref{Texact} using
\eqref{geometric} (with $w$ in place of $z$), to arrive at the truncated expansion \eqref{Texpexact} with
\begin{equation}\label{Tintformula}
T_n  = ( - 1)^n \frac{2}{\pi }\int_0^{ + \infty } s^{2n - 5/3} \Im \left( \e^{ - \frac{\pi }{3}\im} T(\im s) \right)\d s
\end{equation}
and
\[
R_N^{(T)} (w) = \frac{1}{w^{2N}}( - 1)^N \frac{2}{\pi }\int_0^{ + \infty } \frac{s^{2N - 5/3} \Im \left( \e^{ - \frac{\pi }{3}\im} T(\im s) \right)}{1 + (s/w)^2}\d s .
\]
From \eqref{imTineq} and \eqref{Tintformula}, we can assert that $(-1)^n T_n<0$. It is shown in Appendix \ref{Appendix} that the coefficients $T_n$ are rational numbers. To estimate the remainder $R_N^{(T)} (w)$, we can proceed as in the case of $R_N^{(\theta)} (\nu ,z)$. The details are left to the interested reader.
\end{proof}

\section{Discussion}\label{Section6}

We studied the asymptotic expansions of the large real zeros of the cylinder and Airy functions. In particular, we demonstrated the enveloping property of these expansions, and thereby settled two conjectures proposed by Elbert and Laforgia and by Fabijonas and Olver, respectively.

The conjecture on the zeros of the cylinder function $\mathscr{C}_\nu  (z,\alpha )$ was formulated under the assumption that the order $\nu$ is restricted to the interval $-\frac{1}{2} < \nu < \frac{1}{2}$. This requirement was indeed essential in the proof, as otherwise the domain of analyticity of $X_\nu(w)$ may not include the closed half-plane $\Re w\geq 0$, thus preventing us from deriving the necessary and convenient integral representations. Nevertheless, it is a natural question to ask if a statement similar to Theorem \ref{Elberttheorem} (or even Theorem \ref{thmX}) holds for other values of $\nu$. Graphical depiction indicates that $( - 1)^n c_n (\nu ) >0$ holds for all positive integer $n$ and real $\nu$ contained in the intervals $\frac{1}{2} < \pm \nu < \frac{1}{14}\sqrt {217}  = 1.0522 \ldots$ (see Figure \ref{fig4}), suggesting that McMahon's expansion \eqref{McMahonseries} might envelope the zeros $j_{\nu ,\kappa}$ for such values of $\nu$ too. We remark that an estimate for the error term of the approximation
\[
j_{\nu ,k } \approx \beta _{\nu ,k }  - \frac{4\nu ^2  - 1}{8\beta _{\nu ,k } } - \frac{(4\nu ^2  - 1)(28\nu ^2  - 31)}{384\beta _{\nu ,k }^3}
\]
for the positive zeros of $J_\nu(z)$, under the assumptions $\nu > \frac{1}{2}$ and $j_{\nu ,k}  > (2\nu  + 1)(2\nu  + 3)/\pi$, was given by Gatteschi and Giordano \cite{Gatteschi2000}. However, their bound is not related to the first omitted term of McMahon's series.

In addition to Conjecture \ref{Olver}, Fabijonas and Olver \cite{Fabijonas1999} posed analogous conjectures about the negative zeros $a'_k$ and $b'_k$ of the derivatives $\Ai'(z)$ and $\Bi'(z)$, and also about the critical values $\Ai'(a_k )$, $\Bi'(b_k )$, $\Ai(a'_k )$ and $\Bi(b'_k )$, respectively. To tackle, for example, the problem on the zeros $a'_k$ and $b'_k$ via the contour integration method of this paper, one would need to continue analytically the function $X_{2/3}(w)$ to the closed half-plane $\Re w\geq 0$. The domain of analyticity for $X_{2/3}(w)$ we could currently guarantee is smaller than this region.

The asymptotic expansions we discussed in this paper are (generally) divergent. With a more intricate analysis, one can prove that for large $k$, the asymptotic series \eqref{McMahonseries}, for example, has terms that initially decrease in magnitude, attain a minimum and thereafter start to diverge. Optimal truncation of the series (i.e., truncation at, or near, the numerically least term), given by $N \approx \beta _{\nu ,\kappa }$ ($>0$), then yields an approximation whose error is exponentially small in the asymptotic variable $k$. It is possible to gain further exponential accuracy beyond the least term by suitable re-expansion of the remainder term $R_N^{(X)}(\nu,w)$, using its integral representation \eqref{RXrep} (cf. \cite{Bennett2018,Berry1991}). An Euler transformation approach is particularly efficient in this situation (cf. \cite[Section 3]{Boyd1990}).

Finally, we note that one can also study complex zeros of the function $\mathscr{A}(z,\alpha)$. For example, $\Bi(z)$ has an infinite number of complex zeros lying in the sectors $\frac{\pi}{3} <\arg z<\frac{\pi}{2}$ and $-\frac{\pi}{2} <\arg z<-\frac{\pi}{3}$. These zeros are usually denoted by $\beta_k$, in the former sector, and by $\overline{\beta_k}$, in the conjugate sector, arranged in ascending order of absolute value for $k\geq 1$ (see, for instance, \cite[\href{http://dlmf.nist.gov/9.9.i}{\S9.9(i)}]{DLMF} or \cite{Olver1954}). By combining \cite[\href{http://dlmf.nist.gov/9.6.E4}{Eq. 9.6.4} and \href{http://dlmf.nist.gov/10.27.E6}{Eq. 10.27.6}]{DLMF}, \eqref{phaserelation} and \eqref{Jviaphase}, it can be shown that
\begin{equation}\label{Biformula}
\Bi\left(z\e^{ \pm \frac{\pi }{3}\im} \right) = \frac{\sqrt z}{2}M_{1/3} \left( \tfrac{2}{3}z^{3/2} \right)\left( \left( \tfrac{2\sqrt 3}{3} \pm \im \right)\cos \theta _{1/3} \left( \tfrac{2}{3}z^{3/2} \right) - \sin \theta _{1/3} \left( \tfrac{2}{3}z^{3/2} \right) \right)
\end{equation}
for all $z>0$. From \eqref{thetaMdef} and Theorem \ref{thmtheta}, we can infer that the right-hand side is analytic in the sector $\left| \arg z \right| < \frac{\pi}{3}$. Therefore, \eqref{Biformula} is valid for $\left| \arg z \right| < \frac{\pi}{3}$, and thus
\[
\beta _k = \e^{ \frac{\pi}{3}\im } T\left( \tfrac{3}{8}\pi (4k - 1) + \tfrac{3}{4}\im \log 2 \right) \quad \text{and} \quad \overline{\beta _k} = \e^{-\frac{\pi}{3}\im } T\left( \tfrac{3}{8}\pi (4k - 1) - \tfrac{3}{4}\im \log 2 \right)
\]
for all positive integer $k$. Consequently, the rigorous asymptotic description of the complex zeros $\beta _k$ and $\overline{\beta _k}$ follows immediately from Theorem \ref{thmT}. For an extensive discussion on the existence of complex zeros of $\mathscr{A}(z,\alpha)$, we refer the reader to \cite{Gil2014}.

\begin{figure}[!ht]
	\centering
		\includegraphics[width=0.75\textwidth]{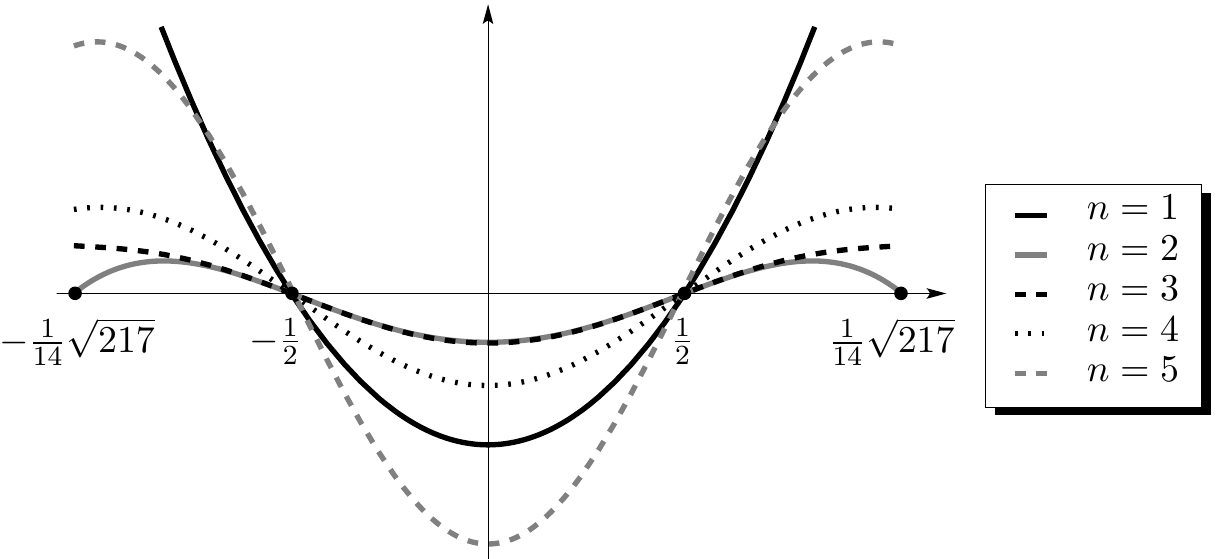}
		\caption{The graphs of the normalised coefficients $\frac{( - 1)^n }{n!}c_n (\nu )$ for $n=1$, $2$, $3$, $4$ and $5$, and $|\nu| \leq \frac{1}{14}\sqrt {217}$. Note that $c_2(\nu)$ has zeros at the endpoints $\nu =\pm \frac{1}{14}\sqrt {217}$.}
		\label{fig4}
\end{figure}

\section*{Acknowledgement} The author was supported by a Premium Postdoctoral Fellowship of the Hungarian Academy of Sciences. The author wish to thank In\^es Aniceto, Christopher J. Howls and Adri B. Olde Daalhuis for useful discussions. The author also thanks the referees for helpful comments and suggestions for improving the presentation.

\appendix

\section{}\label{Appendix}

In this appendix, we show how the various coefficients appearing in this paper can be computed from explicit recursive formulae. Furthermore, we prove that the coefficients $t_n(\nu)$ and $c_n(\nu)$ are polynomials in $\nu^2$ of degree $n$.

To obtain a recurrence relation for the coefficients $t_n(\nu)$, we can exploit the relation \eqref{Mtheta} between $\theta'_\nu (z)$ and $M_\nu(z)$. It is known \cite[\href{http://dlmf.nist.gov/10.18.E17}{Eq. 10.18.17}]{DLMF} that as $z\to +\infty$, with $\nu$ fixed,
\begin{equation}\label{Masymptotic}
\frac{\pi}{2}z M_\nu ^2 (z) \sim 1 + \sum\limits_{n = 1}^\infty \frac{m_n(\nu)}{z^{2n}}
\end{equation}
where
\begin{equation}\label{mformula}
m_n(\nu) = \frac{1}{16^n}\binom{2n}{n}\prod\limits_{k = 1}^n (4\nu ^2  - (2k - 1)^2).
\end{equation}
Since $\theta_\nu(z)$ extends analytically to the half-plane $\Re z>0$, we can differentiate its asymptotic expansion term-wise and obtain
\begin{equation}\label{thetadiffasymptotic}
\theta '_\nu  (z) \sim  1 + \sum\limits_{n = 1}^\infty \frac{(1 - 2n)t_n (\nu )}{z^{2n}}
\end{equation}
as $z\to +\infty$, with $\nu$ being fixed. Substituting \eqref{Masymptotic} and \eqref{thetadiffasymptotic} into \eqref{Mtheta} gives
\[
\left( 1 + \sum\limits_{n = 1}^\infty \frac{(1 - 2n)t_n (\nu )}{z^{2n}} \right) \left(1 + \sum\limits_{n = 1}^\infty \frac{m_n(\nu)}{z^{2n}} \right) \sim 1.
\]
Performing the product of the two asymptotic expansions and equating like powers of $z$, we deduce that
\begin{equation}\label{trecurrence}
t_1 (\nu ) = \frac{4\nu^2-1}{8}, \qquad t_n (\nu ) = \frac{1}{2n - 1}m_n (\nu ) - \sum\limits_{k = 1}^{n - 1} \frac{2k - 1}{2n - 1} t_k (\nu ) m_{n - k}(\nu )
\end{equation}
for all $n\geq 2$ and real $\nu$. A recurrence formula equivalent to \eqref{trecurrence} was also given, up to a slight misprint, in \cite{Heitman2015}. For an alternative recurrence relation, see \cite[Ch. V, \S5.11.4, Eqs. (7), (9) and (11)]{Luke1969}. Since $m_n(\nu)$ is a polynomial in $\nu^2$ of degree $n$, it follows readily by induction that $t_n(\nu)$ is a polynomial in $\nu^2$ of degree at most $n$. The subsequent argument shows that the coefficient $\tau _n$, say, of $\nu^{2n}$ in $t_n (\nu )$ does never vanish. By defining $m_0(\nu)=t_0(\nu)=1$, \eqref{trecurrence} may be re-written as
\begin{equation}\label{trecurrence2}
\sum\limits_{k = 0}^n (1 - 2k)t_k (\nu )m_{n - k} (\nu )  = 0
\end{equation}
for any $n\geq 1$. We can infer from \eqref{mformula} and \eqref{trecurrence2} that
\[
\sum\limits_{k = 0}^n (1 - 2k)\tau _k \frac{1}{4^{n - k} }\binom{2n - 2k}{n - k}  = 0
\]
for all $n\geq 1$. Accordingly, $(1 - 2n)\tau _n$ is the $n$th coefficient in the Maclaurin expansion of $\sqrt{1-z}$, i.e.,
\[
\tau_n = \frac{1}{(2n - 1)^2}\frac{1}{4^n}\binom{2n}{n} >0.
\]

A recurrent scheme for the calculation of the coefficients $c_n(\nu)$ may be derived as follows. Note that the asymptotic series \eqref{Xasymptoticseries} of $X_\nu(w)$ arises by means of formal inversion of the asymptotic series \eqref{Thetaasymptoticseries} of $\Theta_\nu(z)$. Therefore, by a theorem of Fabijonas and Olver \cite[Theorem 2.4]{Fabijonas1999}, $(1 - 2n)c_n (\nu )$ is the coefficient of $z^{-1}$ in the asymptotic expansion of $(\Theta_\nu(z))^{2n-1}$, i.e., the coefficient of $z^{-2n}$ in the asymptotic expansion of $(z^{-1}\Theta_\nu(z))^{2n-1}$. Let us denote, for each positive integer $n$, the coefficient of $z^{-2k}$ in the expansion of $(z^{-1}\Theta_\nu(z))^{2n-1}$ by $t_k^{(n)} (\nu )$, that is
\[
\left( 1 + \sum\limits_{k = 1}^\infty \frac{t_k (\nu )}{z^{2k}} \right)^{2n - 1}  \sim 1 + \sum\limits_{k = 1}^\infty \frac{t_k^{(n)} (\nu )}{z^{2k}}.
\]
With this notation,
\[
c_n (\nu ) = \frac{t_n^{(n)} (\nu )}{1 - 2n}.
\]
By an exercise of Olver \cite[Ch. 1, \S8, Ex. 8.4]{Olver1997}, the coefficients $t_k^{(n)} (\nu)$ may be computed by the recurrence relation
\[
t_1^{(n)} (\nu ) = (2n - 1)t_1 (\nu ), \qquad t_k^{(n)} (\nu ) = (2n - 1)t_k (\nu ) + \sum\limits_{j = 1}^{k - 1} \frac{2nj - k}{k}t_j (\nu )t_{k - j}^{(n)} (\nu ) 
\]
for all $n\geq 1$, $k \geq 2$ and real $\nu$. A simple induction argument reveals that $t_k^{(n)} (\nu )$ is a polynomial in $\nu^2$ of degree $k$ for each $1\leq k \leq n$. Hence, $c_n (\nu)$ is a polynomial in $\nu^2$ of degree $n$. We remark that a somewhat more involved algorithm for the evaluation of the coefficients $c_n (\nu)$ was given earlier by D\"oring \cite{Doring1966}. The explicit form of the first seven coefficients $c_n(\nu)$ can be found in \cite[p. xxxvii]{Bickley1952}.

A recursive formula for the sequence $T_n$ can be obtained from the definition \eqref{Tdef} of $T(w)$. If we replace $T(w)$ and $X_\nu(w)$ in \eqref{Tdef} by their respective asymptotic expansions, we arrive at the relation
\[
\left( 1 + \sum\limits_{n = 1}^\infty  \left( \frac{3}{2} \right)^{2n} c_n \left( \tfrac{1}{3} \right)\frac{1}{w^{2n}} \right)^{2/3}  \sim 1 + \sum\limits_{n = 1}^\infty \frac{T_n}{w^{2n}}.
\]
We can refer again to \cite[Ch. 1, \S8, Ex. 8.4]{Olver1997} and obtain
\[
T_1  = \frac{5}{48}, \qquad T_n  = \frac{2}{3}\left( \frac{3}{2} \right)^{2n} c_n \left( \tfrac{1}{3} \right) + \sum\limits_{k = 1}^{n - 1} \frac{5k - 3n}{3}\left( \frac{3}{2} \right)^{2k} c_k \left( \tfrac{1}{3}\right)T_{n - k} 
\]
for all $n\geq 2$. It is seen from this recurrence relation and the above algorithm for the coefficients $c_n(\nu)$, that the $T_n$'s are all rational numbers. For a list of the first nine coefficients $T_n$, see \cite{Fabijonas1999}.

\section{}\label{Appendix2}

In this appendix, we prove formula \eqref{Aviaphase}. To keep the derivation concise, we introduce the notation $\zeta  = \frac{2}{3}z^{3/2}$, $z>0$. First, we make use of the known relation between the Airy functions and the Bessel functions of order $\pm \frac{1}{3}$ (see, \cite[\href{http://dlmf.nist.gov/9.6.E6}{Eq. 9.6.6} and \href{http://dlmf.nist.gov/9.6.E8}{Eq. 9.6.8}]{DLMF}), and the representation \eqref{Jviaphase} to obtain
\begin{align*}
\mathscr{A}( - z,\alpha )  =\; & \frac{\sqrt z }{3}(J_{1/3} (\zeta ) + J_{ - 1/3} (\zeta ))\cos (\pi \alpha ) - \sqrt {\frac{z}{3}} (J_{1/3} (\zeta ) - J_{ - 1/3} (\zeta ))\sin (\pi \alpha )
\\  = \; &\frac{\sqrt z}{3}M_{1/3} (\zeta )(\cos \theta _{1/3} (\zeta ) + \cos \theta _{ - 1/3} (\zeta ))\cos (\pi \alpha ) \\ & - \sqrt {\frac{z}{3}} M_{1/3} (\zeta )(\cos \theta _{1/3} (\zeta ) - \cos \theta _{ - 1/3} (\zeta ))\sin (\pi \alpha ).
\end{align*}
Now, by \eqref{phaserelation} and basic trigonometry,
\[
\cos \theta _{ - 1/3} (\zeta ) = \cos \left( \theta _{1/3} (\zeta ) + \tfrac{\pi }{3} \right) = \tfrac{1}{2}\cos \theta _{1/3} (\zeta ) - \tfrac{\sqrt 3 }{2}\sin \theta _{1/3} (\zeta ).
\]
Hence, we arrive at
\begin{align*}
\mathscr{A}( - z,\alpha )  =\; & \sqrt {\frac{z}{3}} M_{1/3} (\zeta )\left( \tfrac{\sqrt 3}{2}\cos \theta _{1/3} (\zeta ) - \tfrac{1}{2}\sin \theta _{1/3} (\zeta ) \right)\cos (\pi \alpha ) \\ & - \sqrt {\frac{z}{3}} M_{1/3} (\zeta )\left( \tfrac{1}{2}\cos \theta _{1/3} (\zeta ) + \tfrac{\sqrt 3}{2}\sin \theta _{1/3} (\zeta ) \right)\sin (\pi \alpha ).
\end{align*}
Finally, we note that
\begin{align*}
& \tfrac{\sqrt 3}{2}\cos \theta _{1/3} (\zeta )\cos (\pi \alpha ) - \tfrac{1}{2}\sin \theta _{1/3} (\zeta )\cos (\pi \alpha ) - \tfrac{1}{2}\cos \theta _{1/3} (\zeta )\sin (\pi \alpha ) - \tfrac{\sqrt 3}{2}\sin \theta _{1/3} (\zeta )\sin (\pi \alpha )
\\ & = \tfrac{\sqrt 3}{2}(\cos \theta _{1/3} (\zeta )\cos (\pi \alpha ) - \sin \theta _{1/3} (\zeta )\sin (\pi \alpha )) - \tfrac{1}{2}(\sin \theta _{1/3} (\zeta )\cos (\pi \alpha ) + \cos \theta _{1/3} (\zeta )\sin (\pi \alpha ))
\\ & = \tfrac{\sqrt 3}{2}\cos (\theta _{1/3} (\zeta ) + \pi \alpha ) - \tfrac{1}{2}\sin (\theta _{1/3} (\zeta ) + \pi \alpha ) = \cos \left( \theta _{1/3} (\zeta ) + \pi \left( \alpha  + \tfrac{1}{6} \right) \right),
\end{align*}
giving the desired result \eqref{Aviaphase}.

\bigskip 


\begin{thebibliography}{10}

\bibitem{Bennett2018}
T.~Bennett, C.~J.~Howls, G.~Nemes, A.~B.~Olde Daalhuis, Globally exact asymptotics for integrals with arbitrary order saddles, \emph{SIAM J. Math. Anal.} \textbf{50} (2018), no. 2, pp. 2144--2177.

\bibitem{Berry1991}
M.~V.~Berry, C.~J.~Howls, Hyperasymptotics for integrals with saddles, \emph{Proc. Roy. Soc. London Ser. A} \textbf{434} (1991), no. 1892, pp. 657--675. 

\bibitem{Bickley1952}
W.~G.~Bickley, L.~J.~Comrie, J.~C.~P.~Miller, D.~H.~Sadler, A.~J.~Thompson, \emph{Bessel Functions. Part II: Functions of Positive Integer Order}, British Assoc. Adv. Sci. Mathematical Tables, Vol. 10, Cambridge University Press, Cambridge, UK, 1952.

\bibitem{Boyd1990}
W.~G.~C.~Boyd, Stieltjes transforms and the Stokes phenomenon, \emph{Proc. Roy. Soc. London Ser. A} \textbf{429} (1990), no. 1876, pp. 227--246.

\bibitem{Cruz1982}
A.~Cruz, J.~Sesma, Zeros of the Hankel function of real order and of its derivative, \emph{Math. Comp.} \textbf{39} (1982), no. 160, pp. 639--645.

\bibitem{Doring1966}
B.~D\"oring, Complex zeros of cylinder functions, \emph{Math. Comput.} \textbf{20} (1966), no. 94, pp. 215--222.

\bibitem{Elbert20011}
Á.~Elbert, Some recent results on the zeros of Bessel functions and orthogonal polynomials, \emph{J. Comput. Appl. Math.} \textbf{133} (2001), no. 1--2, pp. 65--83.

\bibitem{Elbert2001}
Á.~Elbert, A.~Laforgia, A conjecture on the zeros of Bessel functions, \emph{J. Comput. Appl. Math.} \textbf{133} (2001), no. 1--2, p. 683.

\bibitem{Fabijonas1999}
B.~R.~Fabijonas, F.~W.~J.~Olver, On the reversion of an asymptotic expansion and the zeros of the Airy functions, \emph{SIAM Rev.} \textbf{41} (1999), no. 4, pp. 762--773.

\bibitem{Fabijonas2004}
B.~R.~Fabijonas, D.~W.~Lozier, F.~W.~J.~Olver, Computation of complex Airy functions and their zeros using asymptotics and the differential equation, \emph{ACM Trans. Math. Software} \textbf{30} (2004), no. 4, pp. 471--490.

\bibitem{Forster1993}
K.-J.~F\"orster, K.~Petras, Inequalities for the zeros of ultraspherical polynomials and Bessel functions, \emph{Z. angew. Math. Mech.} \textbf{73} (1993), no. 9, pp. 232--236.

\bibitem{Gatteschi2000}
L.~Gatteschi, L.~Giordano, Error bounds for McMahon's asymptotic approximation of the zeros of Bessel functions, \emph{Integral Transforms Special Funct.} \textbf{10} (2000), no. 1, pp. 41--56.

\bibitem{Gil2014}
A.~Gil, J.~Segura, On the complex zeros of Airy and Bessel functions and those of their derivatives, \emph{Anal. Appl. (Singap.)} \textbf{12} (2014), no. 5, pp. 537--561.

\bibitem{Heitman2015}
Z.~Heitman, J.~Bremer, V.~Rokhlin, B.~Vioreanu, On the asymptotics of Bessel functions in the Fresnel regime, \emph{Appl. Comput. Harmon. Anal.} \textbf{39} (2015), no. 2, pp. 347--356.

\bibitem{Hethcote1970}
H.~W.~Hethcote, Bounds for zeros of some special functions, \emph{Proc. Amer. Math. Soc.} \textbf{25} (1970), no. 1, pp. 72--74.

\bibitem{Hethcote19702}
H.~W.~Hethcote, Error bounds for asymptotic approximations of zeros of transcendental functions, \emph{SIAM J. Math. Anal.} \textbf{1} (1970), no. 2, pp. 147--152.

\bibitem{Kodaira2007}
K.~Kodaira, \emph{Complex Analysis}, Cambridge University Press, New York, 2007. 

\bibitem{Luke1969}
Y.~L.~Luke, \emph{The Special Functions and their Approximations, Vol. 1}, Academic Press, New York, 1969.

\bibitem{Markushevich1965}
A.~I.~Markushevich, \emph{Theory of Functions of a Complex Variable, Vol. 1}, Translated from the Russian by R.~A.~Silverman, Prentice-Hall, Englewood Cliffs, New Jersey, 1965.

\bibitem{McMahon1894}
J.~McMahon, On the roots of the Bessel and certain related functions, \emph{Annals Math.} \textbf{9} (1894--1895), no. 1/6, pp. 23--30.

\bibitem{Miller1946}
J.~C.~P.~Miller, \emph{The Airy Integral}, British Assoc. Adv. Sci. Mathematical Tables, Vol. B, Cambridge University Press, Cambridge, UK, 1946.

\bibitem{DLMF}
\emph{NIST Digital Library of Mathematical Functions}. \url{http://dlmf.nist.gov/}, Release 1.0.28 of 2020-09-15. F.~W.~J.~Olver, A.~B.~Olde Daalhuis, D.~W.~Lozier, B.~I.~Schneider, R.~F.~Boisvert, C.~W.~Clark, B.~R.~Miller, B.~V.~Saunders, H.~S.~Cohl, and M.~A.~McClain, eds.

\bibitem{Olver1954}
F.~W.~J.~Olver, The asymptotic expansion of Bessel functions of large order, \emph{Philos. Trans. Roy. Soc. London. Ser. A.} \textbf{247} (1954), no. 930, pp. 328--368.

\bibitem{Olver1997}
F.~W.~J.~Olver, \emph{Asymptotics and Special Functions}, AKP Classics, A K Peters Ltd., Wellesley, MA, 1997. Reprint of the 1974 original, published by Academic Press, New York.

\bibitem{Olver19972}
F.~W.~J.~Olver, Asymptotic solutions of linear ordinary differential equations at an irregular singularity of rank unity, \emph{Methods Appl. Anal.} \textbf{4} (1997), no. 4, pp. 375--403.

\bibitem{Pittaluga1991}
G.~Pittaluga, L.~Sacripante, Inequalities for the zeros of the Airy functions, \emph{SIAM J. Math. Anal.} \textbf{22} (1991), no. 1, pp. 260--267.

\bibitem{Polya1998}
G. P\'olya and G. Szeg\H{o}, \emph{Problems and Theorems in Analysis I: Series, Integral Calculus, Theory of Functions}, Springer, Berlin, Heidelberg, 1998.

\bibitem{Watson1944}
G.~N.~Watson, \emph{A Treatise on the Theory of Bessel Functions}, Cambridge University Press, Cambridge, UK, The Macmillan Company, New York, 1944.

\bibitem{Wong1980}
R.~Wong, Error bounds for asymptotic expansions of integrals, \emph{SIAM Rev.} \textbf{22} (1980), no. 4, pp. 401--435.

\end{thebibliography}
\end{document}